\def\ot{\otimes}
\def\BiDer{{\sf Der_{\sf Hopf}}}
\def\Der{{\sf Der}}
\def\Aut{{\sf Aut}}
\newbox\pullbackbox
\def\pullback{\copy\pullbackbox}
\def\Gg{\mathcal G}
\def\Pp{\mathcal P}
\def\Qq{\mathcal Q}
\def\ol{\overline}
\def\Hopf{\mathbf{Hopf}}
\newtheorem{theorem}{Theorem}[subsection] 
\newtheorem{lemma}[theorem]{Lemma}
\newtheorem{proposition}[theorem]{Proposition}
\theoremstyle{definition}
\newtheorem{definition}[theorem]{Definition}
\newtheorem{examples}[theorem]{Examples}
\newtheorem{Assumptions}[theorem]{Assumptions}
\numberwithin{equation}{section}
\begin{document}

\title{Split extension classifiers in the category of cocommutative Hopf algebras}
\author{Marino Gran}
\author{Gabriel Kadjo}
\address[Marino Gran, Gabriel Kadjo]{Institut de Recherche en Math\'ematique et Physique, Universit\'e catholique de Louvain, Chemin du Cyclotron 2, 1348 Louvain-la-Neuve, Belgium}
\email{marino.gran@uclouvain.be}          
\email{gabriel.kadjo@gmail.com}       

\author{Joost Vercruysse}
\address[Joost Vercruysse]{D\'epartement de Math\'ematique, Universit\'e Libre de Bruxelles, Boulevard du Triomphe, 1050 Bruxelles, Belgium.}
\email{jvercruy@ulb.ac.be}

\begin{abstract}
We describe the split extension classifiers in the semi-abelian category of cocommutative Hopf algebras over an algebraically closed field of characteristic zero. The categorical notions of centralizer and of center in the category of cocommutative Hopf algebras is then explored. We show that the categorical notion of center coincides with the one that is considered in the theory of general Hopf algebras.
\end{abstract}

\keywords{cocommutative Hopf algebra, split extension classifier, universal object, centralizer, center}
\subjclass[2010]{16S40, 16T05, 16U70, 18E10, 20J99}
\date{\today}
\maketitle

\section{Introduction}

It is well-known that groups, Lie algebras and Hopf algebras are closely related algebraic structures. 
Recently, we added a new feature to these relationships \cite{GKV}, by observing that the category $\mathbf{Hopf}_{K,coc}$ of cocommutative Hopf algebras over a field of characteristic zero is {\em semi-abelian}  {\cite{JMT}} in the sense of Janelidze, M\'arki and Tholen.  This fact opens the way to many new applications of a wide range of results obtained in that general context, results having consequences in non-abelian homological algebra, radical theory and commutator theory, for instance. We refer the reader to \cite{Vespa} for some other categorical properties of $\mathbf{Hopf}_{K,coc}$, and to \cite{GMVdL} for a conceptual characterization of cocommutative Hopf algebras among cocommutative bialgebras.

One key feature of semi-abelian categories is the concept of an {\em internal action}, which is available without any additional requirements on the category in question. These internal actions behave ``as expected'', in the sense that like for groups or Lie algebras, the induced concept of semi-direct product gives an equivalence between actions and split extensions.

In some cases, but not all, these internal actions are moreover {\em representable}. 
To understand what this means, let us first consider two examples.
An internal action of a group $B$ on a group $G$ corresponds to a group homomorphism $B \to \Aut(G)$ from $B$ to the automorphism group  $\Aut(G)$ of $G$. Similarly, an internal action of a Lie algebra $L_1$ on a Lie algebra $L$ is completely determined by a Lie algebra morphism $L_1 \to \Der(L)$ from $L_1$ to the Lie algebra of derivations of $L$ by $L$. 

A general pointed protomodular category $\mathbf C$ \cite{protomodularity}, 
is said to have representable object actions, or to be \emph{action representable}, in the sense of \cite{BJK},
if given any object $X$ in $\mathbf C$, 
there is a split extension  
\begin{equation}\label{splitgeneral}
\xymatrixcolsep{2pc}\xymatrix{
0 \ar[r]^-{} & X  \ar[r]^{i_1}  & \overline{X}  \ar@<-0.5ex>[r]_{p_2} & [X] \ar@<-0.5ex>[l]_{i_2} \ar[r] & 0 }\end{equation}
with kernel $X$
satisfying the following universal property: for any other split extension with kernel $X$ in $\mathbf C$
\begin{equation}
\xymatrixcolsep{2pc}\xymatrix{
 0 \ar[r] & X  \ar[r]^k  & A \ar@<-0.5ex>[r]_-{f} & B \ar@<-0.5ex>[l]_-{s} \ar[r] & 0 }\end{equation}
there is a unique (up to isomorphism) $\chi \colon B \rightarrow [X]$ (and $\overline{\chi}\colon A\longrightarrow \overline{X}$) such that the following diagram of split exact sequences commutes \[\xymatrixcolsep{2pc}\xymatrix{
0 \ar[r]^-{}  & X \ar@{=}[d] \ar[r]^-{k}  & A \ar@<-0.5ex>[r]_-{f} \ar[d]^{\overline{\chi}} & B \ar@<-0.5ex>[l]_-{s} \ar[r] \ar[d]^{\chi} & 0
 \\
0 \ar[r]^-{} & X  \ar[r]^-{i_1}  & \overline{X} \ar@<-0.5ex>[r]_-{p_2} & [X] \ar@<-0.5ex>[l]_-{i_2} \ar[r] & 0.
}\] 
The object $[X]$ in \eqref{splitgeneral} is called a \textit{split extension classifier} for $X$. Besides the categories of groups and of Lie algebras, there are {some other examples} of categories having this property, {such as the ones of} crossed modules, boolean rings, and commutative von Neumann regular rings \cite{BJK2, BJK}. It is well known that not all semi-abelian categories are action representable: for instance, this is the case for the category of not necessarily unitary rings. When a semi-abelian category $\mathbf C$ is action representable, commutator theory becomes simpler, since the Huq centrality of normal monomorphisms and the Smith centrality of equivalence relations always coincide in this case \cite{MR2553540}. 

In \cite{GKV} we showed that the category $\mathbf{Hopf}_{K,coc}$ of cocommutative Hopf algebras over an algebraically closed field of characteristic zero is 
action representable. In fact, it is widely known that the category of cocommutative Hopf algebras over any commutative base ring $K$ (even not necessarily a field) is isomorphic to the category of internal groups in the category of cocommutative coalgebras over $K$. Hence, even if in this more general case it is unknown whether $\mathbf{Hopf}_{K,coc}$ is semi-abelian, the category is still protomodular and 
it makes sense to consider split extension classifiers in $\mathbf{Hopf}_{K,coc}$. Since the category of cocommutative coalgebras is moreover complete and cartesian closed by Theorem 5.3 in \cite{Barr3}, it follows by Theorem 2.1 in \cite{Clementino} that $\mathbf{Hopf}_{K,coc}$ is action representable over any commutative ring $K$.

However, these arguments do not provide an explicit description of the split extension classifiers in $\mathbf{Hopf}_{K,coc}$ which, for a given object $X$, consists of the object $[X]$ together with a canonical action on $X$ satisfying the above-mentioned universal property. 
In the classical literature on Hopf algebra theory, which, despite its close relationship with groups and Lie algebras, has been developed independently from the evolutions in semi-abelian category theory, 
 similar properties as the one described above have been observed.  Indeed, Sweedler's {\em measuring coalgebras} and {\em measuring bialgebras} \cite{Sweedler} show the existence of universal acting bialgebras on arbitrary coalgebras. Similarly, universal coacting bialgebras and Hopf algebras on certain classes of algebras, which are thought of as describing the `hidden symmetries' of non-commutative spaces, are shown to exist and have been explicitly described \cite{Manin}, \cite{Tambara}. Again, although it is often not so hard to show that this kind of universal (co)acting Hopf algebras exist, the {difficult} part is to give useful descriptions of these objects. The present paper can be understood as a contribution to this work, by joining the forces of Hopf algebra theory with more recent concepts introduced and investigated in categorical algebra.

To be more precise, the aim of the present article is to give an explicit description of the split extension classifiers in the category $\mathbf{Hopf}_{K,coc}$ of cocommutative Hopf algebras over an algebraically closed field of characteristic zero. Our main result is Theorem \ref{universal}, that uses an explicit description of split exact sequences in $\mathbf{Hopf}_{K,coc}$ in terms of semi-direct products (also called smash products) of cocommutative Hopf algebras.
We then analyse the abstract notions of center and of centralizer, introduced by D.\ Bourn and G.\ Janelidze in \cite{MR2553540}, in the category $\mathbf{Hopf}_{K,coc}$. This part {is also based on} some nice results concerning centralizers due to A.\ Cigoli and S.\ Mantovani in \cite{Cigoli}.
We finally compare our description of the center in $\mathbf{Hopf}_{K,coc}$ with the defintion given by A.\ Chirvasitu and P.\ Kasprzak in \cite{Center} for arbitrary (not necessarily cocommutative) Hopf algebras, and observe that they coincide in the cocommutative case.
Since our results heavily rely on the Cartier-Gabriel-Konstant-Milnor-Moore decomposition theorem for cocommutative Hopf algebras which is only valid over an algebraically closed field of characteristic zero, our description only holds in this case. It remains an open question if this description can be extended to the general case.

\noindent {\bf Acknowledgement.} The authors are grateful to Tim Van der Linden for an important remark on a preliminary version of this paper, {and to the referee for some useful suggestions}.

\section{Preliminaries}
\subsection{Action representability for Groups and Lie algebras}

Let us start by recalling how the categories of Groups and Lie algebras satisfy the definition of action representable category, as recalled in the introduction. 

Given a group $G$ and its automorphism group $\Aut(G)$, consider the canonical split extension \begin{equation}\label{splitGrp}
\xymatrixcolsep{2pc}\xymatrix{
0 \ar[r]^-{} & G  \ar[r]^-{i_1}  & G \rtimes \Aut(G) \ar@<-0.5ex>[r]_-{p_2} & \Aut(G) \ar@<-0.5ex>[l]_-{i_2} \ar[r] & 0 }\end{equation}
where the action of $\Aut(G)$ on $G$ defining the semidirect product $G \rtimes \Aut(G)$ is simply given by the evaluation, $p_2$ is the second projection and $i_1, i_2$ are the canonical injections. As explained in \cite{classi, BJK2, BJK} this split extension has a remarkable universal property in the category $\mathbf{Grp}$ of groups:
 any other split extension with kernel $G$ \begin{equation}\label{other} \xymatrixcolsep{2pc}\xymatrix{
0 \ar[r] & G  \ar[r]^k  & A \ar@<-0.5ex>[r]_-{f} & B \ar@<-0.5ex>[l]_-{s} \ar[r] & 0 }\end{equation}
 can be uniquely obtained (up to isomorphism) by pulling back the split extension \eqref{splitGrp}
along a group homomorphism $\chi \colon B \rightarrow \Aut(G)$:
\[\xymatrixcolsep{2pc}\xymatrix{
0 \ar[r] & G  \ar@{=}[d] \ar[r]^k  & A \ar[d]^{\overline{\chi}} \ar@<-0.5ex>[r]_-{f} & B \ar[d]^{\chi} \ar@<-0.5ex>[l]_-{s} \ar[r] & 0 \\
0 \ar[r]^-{} & G  \ar[r]^-{i_1}  & G \rtimes \Aut(G) \ar@<-0.5ex>[r]_-{p_2} & \Aut(G) \ar@<-0.5ex>[l]_-{i_2} \ar[r] & 0}
\] The group action $\chi \colon B \rightarrow \Aut(G)$ of $B$ on $G$ corresponds to the semi-direct product $G \rtimes_{\chi} B$ defining a split extension of $B$ by $G$ isomorphic to \eqref{other}.

In the theory of Lie algebras (over a field $K$) a similar role is played by the split extension 
\begin{equation}\label{usplitLie}
\xymatrixcolsep{2pc}\xymatrix{
0 \ar[r]^-{} & L  \ar[r]^-{i_1}  & L \rtimes_{} \Der(L) \ar@<-0.5ex>[r]_-{p_2} & \Der(L) \ar@<-0.5ex>[l]_-{i_2} \ar[r] & 0.}\end{equation}
Here $L$ is a Lie algebra (over $K$), $\Der(L)$ the Lie algebra of derivations of $L$ by $L$, and 
 $L \rtimes_{} \Der(L)$ the semi-direct product in $\mathbf{Lie}_{K}$, with Lie bracket defined by
$$\big[(x,\phi),(x', \psi ) \big] : =  \Big([x,x'] + \phi(x') - \psi (x), \phi \circ \psi - \psi \circ \phi \Big),$$
for any $x, x' \in L$, for any $\phi, \psi \in \Der(L)$.
In the category $\mathbf{Lie}_{K}$ of Lie algebras, the actions $\rho \colon L_1 \rightarrow \Der(L)$ of $L_1$ on $L$ correspond again to the isomorphism classes $\mathbf{SplExt} (L_1, L)$ of split extensions of $L_1$ by $L$.

\subsection{The category of cocommutative Hopf algebras}

Throughout this paper, let $K$ be a field (although for most definitions it is enough to assume that $K$ is a commutative ring). By a $K$-coalgebra we mean a coassociative and counital coalgebra over $K$, that is a vector space $C$ endowed with linear maps $\Delta:C\to C\ot C$ and $\epsilon:C\to K$ satisfying $(id\ot \Delta)\circ \Delta=(\Delta\ot id)\circ\Delta$ (coassociativity) and $(id\ot \epsilon)\circ \Delta=id=(\epsilon\ot id)\circ\Delta)$ (counitality). We will use the classical Sweedler notation for calculations with the comultiplication, that is, for any $c\in C$ we write $\Delta(c)=c_1\ot c_2$ (summation understood). Coassociativity can then be expressed by the formula
$$c_1\ot c_{2,1}\ot c_{2,2}=c_{1,1}\ot c_{1,2}\ot c_2=c_1\ot c_2\ot c_3.$$
Recall that a $K$-bialgebra $(H,M,u,\Delta,\epsilon)$ is an algebra $H$ with multiplication $M:H\ot H\to H$ and unit $u:K\to H$ that is at the same time a coalgebra with comultiplication $\Delta:H\to H\ot H$ and counit $\epsilon: H\to K$ such that $M$ and $u$ are coalgebra morphisms. A Hopf $K$-algebra is a sextuple $(H,M,u,\Delta,\epsilon,S)$ where $(H,M,u,\Delta,\epsilon)$ is a bialgebra and $S:H\to H$ is a linear map, called the {\em antipode}, that makes the following two diagrams commute
\[\xymatrix{
& H\otimes H \ar@<0.5ex>[rr]^-{S \otimes id} \ar@<-0.5ex>[rr]_-{id \otimes S} & &  H\otimes H \ar[dr]^-{M}  \\
H \ar[rr]_-{\epsilon} \ar[ur]^-{\Delta} & &K \ar[rr]_-{u}& & H
   }\]
or, in Sweedler notation,
$$h_1S(h_2)=\epsilon(h)1_H=S(h_1)h_2,$$
for all $h\in H$.

A Hopf algebra $H$ is said to be \textit{cocommutative} if its underlying coalgebra is cocommutative, this means that the comultiplication map $\Delta$ satisfies $\sigma \circ \Delta = \Delta$, where $\sigma\colon H \otimes H \longrightarrow H \otimes H$ is the switch map $\sigma(x \otimes y)=(y \otimes x)$, for any $x\otimes y\in H \otimes H$.

A morphism of Hopf algebras is a linear map that is both an algebra and a coalgebra morphism (under these conditions the antipode is automatically preserved). 

We denote by $\mathbf{Hopf}_{K}$ the category whose objects are Hopf $K$-algebras and morphisms are morphisms of Hopf $K$-algebras. The full subcategory of cocommutative Hopf $K$-algebras is denoted by $\mathbf{Hopf}_{K,coc}$.

{The categorical concepts of {\em subobject} and {\em kernel} lead to the following notions in the category of Hopf algebras.}
A \textit{sub-Hopf algebra} $H$ of a Hopf algebra $A$ is a subalgebra of $A$ (i.e.\ $u(K)\subset H$ and $M(H\ot H)\subset H$) which is at the same time a subcoalgebra of $A$ (i.e.\ $\Delta(H)\subset H\otimes H$) and which 
is stable under the antipode (i.e.\ $S(H)\subset H$). A sub-Hopf algebra $H$ of $A$ is said to be \textit{normal} (see \cite{Montgom}, for instance) if $a_{1}hS(a_{2}) \in H$ and $S(a_{1})ha_{2} \in H$, $\forall h \in H$ and $\forall a\in A$.

An element $x$ of a Hopf $K$-algebra $H$ is called {\em grouplike} if $\Delta(x)=x\ot x$ and $\epsilon(x)=1$.  A Hopf algebra $H$ is called a {\em group Hopf algebra} if it is generated as a vector space by grouplike elements. The full replete subcategory of ${\bf Hopf}_K$ whose objects are group Hopf algebras is denoted by ${\bf GrpHopf}_K$.
 The set of all grouplike elements of a given Hopf algebra $H$ is denoted by $\Gg(H)$ and the multiplication of $H$ induces a group structure on $\Gg(H)$; the inverse of an element $x\in \Gg(H)$ is given by $S(x)$. This leads to a functor $\Gg:{\bf Hopf}_K\to {\bf Grp}$, we also denote $\Gg(H)=G_H$ for a given Hopf algebra $H$. Moreover, this functor $\Gg$ has a left adjoint $K[-]:{\bf Grp}\to {\bf Hopf}_K$, which assigns to a group $G$ the group algebra $K[G]$ endowed with a comultiplication that turns all elements of $G$ into grouplike elements. If $K$ is a field, then the above adjunction induces an equivalence of categories between ${\bf GrpHopf}_K$ and ${\bf Grp}$.

An element $x$ of a Hopf $K$-algebra $H$ is called {\em primitive} if $\Delta(x)=1\ot x+x\ot 1$ (remark that in this case $\epsilon(x)=0$). A Hopf algebra is called a {\em primitive Hopf algebra} if it is generated as an algebra by primitive elements. The full replete subcategory of ${\bf Hopf}_K$ whose objects are primitive Hopf algebras is denoted by ${\bf PrimHopf}_K$. The set of all primitive elements of a given Hopf algebra $H$ is denoted by $\Pp(H)$ and the commutator bracket (i.e. $[x,y]=yx-xy$) induces a Lie algebra structure on $\Pp(H)$. This leads to a functor $\Pp: {\bf Hopf}_K\to {\bf LieAlg}_K$, where ${\bf LieAlg}_K$ denotes the category of Lie $K$-algebras and Lie algebra morphisms. We also denote $\Pp(H)=L_H$ for a given Hopf algebra $H$. Moreover, this functor has a left adjoint $U:{\bf LieAlg}_K\to {\bf Hopf}_K$, which associates to each Lie algebra the universal enveloping algebra $U(L)$. Since $U(L)$ is generated as an algebra by the elements of $L$, and these are primitive elements in the Hopf algebra $U(L)$, the image of the functor $U$ lies in ${\bf PrimHopf}_K$.
In the case where $K$ is a field of characteristic $0$, the above adjunction induces an equivalence between the category $\mathbf{PrimHopf}_{K}$ and the category $\mathbf{LieAlg}_{K}$, see \cite[Theorem 5.18]{Milnor}.

By construction both ${\bf GrpHopf}_K$ and $\mathbf{PrimHopf}_{K}$ are full and replete subcategories of $\Hopf_{K,coc}$ and the above adjunctions can be viewed as adjunctions between the category $\Hopf_{K,coc}$ and the categories ${\bf Grp}$ and  $\mathbf{LieAlg}_{K}$ respectively.

The category ${\Hopf}_{K,coc}$ is protomodular \cite{protomodularity}, which can be most easily seen by viewing it as the category of internal groups in the category of cocommutative coalgebras. In particular, the zero object in this category is given by the base field $K$, and the zero morphism between Hopf algebras $A$ and $B$ is the morphism $u_B\circ \epsilon_A$. Furthermore, in this category, monomorphisms are exactly the injective morphisms and the classes of normal epimorphisms, regular epimorphisms, epimorphisms and surjective morphisms coincide (see \cite{GKV}).

A {\em short exact sequence} in the category ${\Hopf}_{K,coc}$ is a sequence of the form 
\begin{equation}\label{sequence}
\xymatrix{ 0\ar[rr] && A \ar[rr]^k && H \ar[rr]^p && B \ar[rr] && 0}
\end{equation}
where $0$ represents the zero object, i.e.\ the base field $K$, $k$ and $p$ are Hopf algebra morphisms such that $k$ is the kernel of $p$ and $p$ is the cokernel of $k$.
Recall that a kernel of a morphism $p:H\to B$ in the category of cocommutative Hopf algebras can be computed explicitly by means of the following subobject  of $H$
$$HKer(p)=\{h\in H~|~p(h_1)\ot h_2=1_B\ot h\}.$$
By abuse of notation, we will sometimes denote as well $HKer(p)$ for the kernel morphism $HKer(p)\to H$ in ${\Hopf}_{K,coc}$.
Remark that an expression of the form \eqref{sequence} is an exact sequence in ${\Hopf}_{K,coc}$, whenever $p$ is a surjective homomorphism and $k$ is the kernel of $p$.
A short exact sequence is said to be {\em split} if there exists moreover a Hopf algebra morphism $s:B\to H$ such that $p\circ s=id_B$.

\subsection{Actions in the category of cocommutative Hopf algebras}
Let us recall some useful results from \cite{Molnar}.

\begin{definition}\label{axioms}
Let $B$ be a cocommutative Hopf algebra. A {\em $B$-module Hopf algebra} is a Hopf algebra $A$ that is at the same time a left $B$-module, with action denoted by $\rho\colon B\ot A\to A, \rho(b\ot a)=b\cdot a$, such that its bialgebra maps (multiplication, unit, comultiplication and counit) are maps of $B$-modules. Explicitly, the linear map $\rho$ has to satisfy the following axioms for any $b\in B$ and any $a\in A$:
\begin{enumerate}[({Axiom} 1)]
\item
$1_{B}\cdot a = a$;
\item
$(bb')\cdot a = b\cdot (b'\cdot a)$;
\item
$b\cdot (aa')=(b_{1}\cdot a)(b_{2}\cdot a')$;
\item 
$b\cdot 1_{A}=\epsilon(b)1_{A}$; 
\item
$(b\cdot a)_{1}\otimes (b\cdot a)_{2}=b_{1}\cdot a_{1} \otimes b_{2}\cdot a_{2}$;
\item
$\epsilon(b\cdot a)=\epsilon(b)\epsilon(a)$. \end{enumerate}

If also $A$ is a cocommutative Hopf algebra, we then say that $\rho$ is an {\em action} of $B$ on $A$ in ${\Hopf}_{K,coc}$.
\end{definition}

Remark that $A$ is a $B$-module Hopf algebra if and only if $A$ is an internal Hopf algebra in the symmetric monoidal category of $B$-modules.

\begin{proposition}\cite{Molnar}
Let $B$ be a cocommutative Hopf algebra and $A$ a $B$-module Hopf algebra with action $\rho:B\ot A\to A$. 
Then there exists a Hopf algebra $A \rtimes_{\rho} B$, whose underlying vector space
is $A\ot B$ and whose structure maps are given by
\begin{eqnarray*}
u_{A\rtimes B}&=&u_A\ot u_B\\
M_{A \rtimes_{\rho} B}(a\ot b\ot a'\ot b')&=&a(b_1\cdot a')\ot b_2b'\\
\Delta_{A\rtimes B}&=& (id_A \otimes \sigma \otimes id_{B})(\Delta_{A} \otimes \Delta_{B})\\
\epsilon_{A\rtimes B}&=&\epsilon_A\ot \epsilon_B\\
S_{A\rtimes B}(a\ot b) &=& (S_A(a)\ot 1_B)(1_A\ot S_B(b))= S_B(b_1)\cdot S_A(a)\ot S_B(b_2)
\end{eqnarray*}
{If moreover $A$ is cocommutative, then $A\rtimes_\rho B$ is also cocommutative.}

We call $A \rtimes_{\rho} B$ the \textit{semi-direct product of Hopf algebras} (also known as {\em smash product}) of $B$ and $A$.
\end{proposition}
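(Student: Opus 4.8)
The plan is to verify that the sextuple $(A\ot B, M_{A\rtimes_\rho B}, u_{A\rtimes B}, \Delta_{A\rtimes B}, \epsilon_{A\rtimes B}, S_{A\rtimes B})$ given in the statement really is a Hopf algebra, and then to check cocommutativity as a separate final step. I would organise the computation around three independent verifications—the algebra structure, the coalgebra structure, and the compatibility conditions (the bialgebra axioms plus the antipode identity)—since these can be checked largely in isolation. For the algebra part, I would show that $M_{A\rtimes_\rho B}$ is associative and unital. Associativity is where the module axioms enter: expanding $M\bigl((a\ot b)(a'\ot b')\bigr)(a''\ot b'')$ and comparing with the other bracketing forces one to use Axiom 2 (the action is associative, $(bb')\cdot a = b\cdot(b'\cdot a)$) together with Axiom 3 ($b\cdot(aa')=(b_1\cdot a)(b_2\cdot a')$) and coassociativity of $\Delta_B$. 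Unitality $u_{A\rtimes B}=u_A\ot u_B$ follows from Axiom 1 ($1_B\cdot a=a$) and Axiom 4 ($b\cdot 1_A=\epsilon(b)1_A$).

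Next I would treat the coalgebra structure. Here $\Delta_{A\rtimes B}=(id_A\ot\sigma\ot id_B)(\Delta_A\ot\Delta_B)$ is just the standard tensor-product comultiplication on $A\ot B$, so coassociativity and counitality are immediate from those of $A$ and $B$ and require none of the action axioms. The genuine content lies in the bialgebra compatibilities: one must check that $M_{A\rtimes_\rho B}$ and $u_{A\rtimes B}$ are coalgebra morphisms, equivalently that $\Delta_{A\rtimes B}$ and $\epsilon_{A\rtimes B}$ are algebra morphisms for the twisted multiplication. Verifying $\Delta_{A\rtimes B}\bigl((a\ot b)(a'\ot b')\bigr)=\Delta_{A\rtimes B}(a\ot b)\,\Delta_{A\rtimes B}(a'\ot b')$ is the most delicate bookkeeping step, as it mixes the twist by $\rho$ with the switch map $\sigma$; it is precisely Axiom 5 (the action respects comultiplication, $(b\cdot a)_1\ot(b\cdot a)_2=b_1\cdot a_1\ot b_2\cdot a_2$) that makes the two sides agree, with cocommutativity of $B$ used to move the relevant tensor factors past one another. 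Compatibility of $\epsilon$ with the multiplication uses Axiom 6.

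Once the bialgebra structure is established, the antipode identity $h_1 S(h_2)=\epsilon(h)1=S(h_1)h_2$ for $h=a\ot b$ must be checked with the stated $S_{A\rtimes B}$; I would expand both convolution products and reduce them, via the antipode identities of $A$ and $B$ together with Axioms 1--4, to $\epsilon_A(a)\epsilon_B(b)(1_A\ot 1_B)$. (Strictly, once $A\ot B$ is known to be a bialgebra admitting a left inverse to $id$ under convolution, the antipode is forced, so one really only needs to confirm the given formula computes that inverse.) Finally, for the cocommutativity claim, since $\Delta_{A\rtimes B}$ is the plain tensor comultiplication twisted only by the fixed switch $\sigma$, the relation $\sigma_{A\ot B}\circ\Delta_{A\rtimes B}=\Delta_{A\rtimes B}$ follows directly from cocommutativity of $A$ and of $B$, with no appeal to the action. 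I expect the bialgebra compatibility for $\Delta$—matching the twist against $\sigma$ using Axiom 5 and the cocommutativity of $B$—to be the main obstacle, since it is the one place where all of the coalgebra-module axioms and the switch interact simultaneously; the remaining verifications are routine Sweedler-notation manipulations.
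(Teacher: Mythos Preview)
Your outline is correct and is the standard direct verification one finds in Molnar's original paper. Note, however, that the present paper does not actually give a proof of this proposition: it is stated with attribution to \cite{Molnar} and used as a black box, so there is no ``paper's own proof'' to compare against. Your approach is exactly what a self-contained proof would look like.
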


The following useful lemma about split short exact sequences proved in \cite{GKV} is a reformulation of Theorem $4.1$ in \cite{Molnar}. 

\begin{lemma}\label{Molnar}
Every split short exact sequence in $\mathbf{Hopf}_{K,coc}$ 
 \[\xymatrixcolsep{2pc}\xymatrix{
0 \ar[r]^-{} &  A \ar[r]^-{k}  & H \ar@<0.5ex>[r]^-{p} & B  \ar@<0.5ex>[l]^-{s} \ar[r] & 0 }\] is canonically isomorphic to the semi-direct product exact sequence
\[\xymatrixcolsep{2pc}\xymatrix{
0 \ar[r]^-{} &  A \ar[r]^-{i_1}  & A \rtimes B \ar@<0.5ex>[r]^-{p_2} & B  \ar@<0.5ex>[l]^-{i_2} \ar[r] & 0 }\] where $i_1=id_A\ot u_B$, $i_2=u_A\ot id_B$ and $p_2=\epsilon_A\ot id_B$.
\end{lemma}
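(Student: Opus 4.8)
The plan is to construct the canonical isomorphism explicitly and verify it respects the split exact sequence structure. Given the split short exact sequence with kernel $A$, retraction $p$ and section $s$, I would first define a candidate comparison map $\theta\colon A\rtimes B\to H$ on the underlying vector space by $\theta(a\ot b)=k(a)s(b)$, using the multiplication of $H$. The natural $B$-module structure on $A$ making this work is the conjugation-type action induced by $s$, namely $b\cdot a$ should be defined (after identifying $A$ with its image $k(A)$ inside $H$) by $b\cdot a = s(b_1)\,k(a)\,s(S(b_2))$, which lands back in $A=HKer(p)$ precisely because $A$ is a normal sub-Hopf algebra and the sequence splits. So the first genuine step is to check that this formula does define an action of $B$ on $A$ in the sense of Definition \ref{axioms}, i.e.\ that it satisfies all six axioms; this is essentially the cocommutative-Hopf analogue of the conjugation action in the group case.

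Next I would verify that $\theta$ is a morphism of Hopf algebras, checking compatibility with multiplication, unit, comultiplication and counit. The multiplicativity of $\theta$ is where the semi-direct product multiplication formula $M_{A\rtimes B}(a\ot b\ot a'\ot b')=a(b_1\cdot a')\ot b_2b'$ gets matched against the product $k(a)s(b)k(a')s(b')$ in $H$: one rewrites $s(b)k(a')$ as $s(b_1)k(a')s(S(b_2))\,s(b_3) = k(b_1\cdot a')\,s(b_2)$ using cocommutativity and the antipode identities, which is exactly what reproduces the smash product. Compatibility with the comultiplication follows from the given formula $\Delta_{A\rtimes B}=(id_A\ot\sigma\ot id_B)(\Delta_A\ot\Delta_B)$ together with the fact that $k$ and $s$ are coalgebra morphisms, and the counit compatibility is immediate.

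I would then show $\theta$ makes the diagram of split exact sequences commute, i.e.\ $\theta\circ i_1=k$ and $p\circ\theta=p_2=\epsilon_A\ot id_B$ (using $p\circ k=0$, $p\circ s=id_B$) and $\theta\circ i_2=s$. Finally, to conclude $\theta$ is an isomorphism, I would invoke protomodularity: in a protomodular (in particular semi-abelian) category, a morphism of split extensions that restricts to the identity on the kernel $A$ and to the identity on the quotient $B$ is necessarily an isomorphism by the short five lemma. Since by Lemma \ref{Molnar}'s own setup $i_1,i_2,p_2$ already form a split extension of $B$ by $A$ with the same kernel and quotient, the map $\theta$ is a comparison of split extensions over $\mathrm{id}_A$ and $\mathrm{id}_B$, hence an isomorphism.

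The main obstacle I expect is the verification that the conjugation formula $b\cdot a=s(b_1)k(a)S(s(b_2))$ actually takes values in $HKer(p)$ and satisfies the module-algebra axioms: this requires careful bookkeeping with Sweedler indices, essential use of cocommutativity to rearrange the relevant tensor factors, and the antipode identities $h_1S(h_2)=\epsilon(h)1=S(h_1)h_2$. Once this action is in hand and shown to coincide with the one implicit in the smash product, matching the two multiplications is a direct but notation-heavy computation; the final isomorphism claim is then the only place where protomodularity (the short five lemma) does real work rather than bare calculation.
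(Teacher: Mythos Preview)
Your approach is correct and is essentially the standard argument, but note that the paper does not actually prove this lemma: it is stated without proof, as a reformulation of Theorem~4.1 in \cite{Molnar} (and referred to as proved in \cite{GKV}). Your sketch---defining the conjugation-type action $b\cdot a = s(b_1)\,k(a)\,S(s(b_2))$, building the comparison map $\theta(a\ot b)=k(a)s(b)$, checking it is a Hopf algebra morphism, and concluding by the split short five lemma coming from protomodularity---is precisely the argument behind those references, so there is nothing to compare beyond the fact that the paper outsources the proof while you spell it out.
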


\begin{examples}\label{examplesaction}
\begin{enumerate}
\item \cite{Molnar} Let $N$ and $M$ be groups and $\tau\colon M \longrightarrow \Aut(N)$ be a group homomorphism defining an action of $M$ on $N$. Then the map \linebreak $M\times N\longrightarrow N$ defined by $(m,n) \mapsto \tau(m)(n)$ induces a coalgebra map $K[M]\otimes K[N] \longrightarrow K[N]$ which makes $K[N]$ into a $K[M]$-module Hopf algebra. Thus $K[N]\rtimes K[M]$ is a Hopf algebra. One actually has that $$K[N]\rtimes K[M] \cong K[N\rtimes_{\tau} M]$$ where $N\rtimes_{\tau} M$ is the usual semi-direct product of groups with the multiplication defined $\forall n_1,n_2\in N, \forall m_1,m_2 \in M$ by $$(n_1,m_1)(n_2,m_2)=(n_1\tau(m_1)(n_2), m_{1}m_{2}).$$

\item \cite{Molnar} Similarly, for Lie algebras $L$ and $M$ and $\nu\colon L\longrightarrow Der(M)$, a Lie algebra homomorphism, the action $L\times M \longrightarrow M$ defined by $(l,m) \mapsto  \nu(l)(m)$ (for any $(l,m) \in L \times M$) induces a coalgebra map $U(L)\otimes U(M)\longrightarrow U(M)$ making $U(M)$ into a $U(L)$-module Hopf algebra, and as above the Hopf algebra $$U(M)\rtimes U(L) \cong U(M\rtimes_{\nu} L)$$ where $M\rtimes_{\nu} L$ is the usual semi-direct product of Lie algebra with the Lie bracket defined $ \forall m_1,m_2\in M, \forall l_1,l_2 \in L$ by $$ \big[(m_1,l_1),(m_2,l_2)\big] = \big([m_1,m_2]+\nu(l_1)(m_2) - \nu(l_2)(m_1), [l_1,l_2]\big).$$

\item Let $X$ be a normal Hopf subalgebra of a cocommutative Hopf algebra $H$. Then $X$ has a structure of $H$-module Hopf algebra with action defined by
$$h\cdot x=h_1 x S(h_2),$$
for all $x\in X$ and $h\in H$.

\item Let $H$ be any Hopf algebra. Consider $\Pp(H)=L_H$ the Lie algebra of primitive elements of $H$ and $\Gg(H)=G_H$ the group of grouplike elements of $H$. Then $U(L_H)$ and $K[G_H]$ are cocommutative Hopf algebras and by setting 
$$g\cdot x = gxS(g)=gxg^{-1},$$ 
$\forall x\in L_H$ and $\forall g\in G_H$ one defines an action of $K[G_H]$ on $U(L_H)$. 
Furthermore, the map 
\begin{equation}\label{omega}
\omega_H:U(L_{H})\rtimes K[G_{H}]\to H,\quad \omega_H(x\ot g)=xg
\end{equation}
is a Hopf algebra morphism between the induced semi-direct product Hopf algebra and the original Hopf algebra $H$.
\end{enumerate}
\end{examples}

We use Lemma \ref{Molnar} to reformulate the well-known structure theorem for cocommutative Hopf algebras over an algebraically closed field of characteristic zero in terms of split exact sequences (see for instance \cite{Sweedler}, page 279 in combination with Lemma 8.0.1(c)):

\begin{theorem}[Cartier-Gabriel-Kostant-Milnor-Moore]\label{MM} 
For any cocommutative Hopf K-algebra $H$, over an algebraically closed field $K$ of characteristic $0$, 
the Hopf algebra morphism $\omega_H$ \eqref{omega} defined by $\omega_H(x\ot g)=xg$ (for any $g \in G_H$ and $ x \in L_H$) is an isomorphism:
$$U(L_{H})\rtimes K[G_{H}] \stackrel{\omega_H}{\cong} H.$$ 
Consequently, for each $H$ there exists a canonical split exact sequence of cocommutative Hopf algebras of the following form
 \[\xymatrix{
0 \ar[r]^-{} & U(L_{H})  \ar[r]^-{i_{H}} & H \ar@<0.5ex>[r]^-{p_{H}} & K[G_{H}]  \ar@<0.5ex>[l]^-{s_{H}} \ar[r]^-{} & 0 }\] where $i_{H} = \omega_H \circ i_{1}$, $s_{H} = \omega_H \circ i_{2}$ and $p_{H} = p_{2} \circ \omega_H^{-1}$
with the notations of Lemma \ref{Molnar}.
\end{theorem}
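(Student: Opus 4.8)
This statement is a reformulation, in the language of split exact sequences, of the classical Cartier--Gabriel--Kostant--Milnor--Moore decomposition theorem, so the plan is to reduce it to that theorem and then transport the conclusion through Lemma \ref{Molnar}. The first step is to record that $\omega_H$ really is a morphism in $\Hopf_{K,coc}$; this is precisely item (4) of Examples \ref{examplesaction}. Using the conjugation action $g\cdot x = gxS(g)=gxg^{-1}$ of $K[G_H]$ on $U(L_{H})$, the smash-product multiplication reads $(x\ot g)(x'\ot g')=x(g\cdot x')\ot gg'$ on a grouplike $g$, and a one-line Sweedler-notation check gives $\omega_H((x\ot g)(x'\ot g'))=x(gx'g^{-1})gg'=xg\,x'g'=\omega_H(x\ot g)\,\omega_H(x'\ot g')$; compatibility with the comultiplication, counit and unit is equally direct. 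Cocommutativity of the semi-direct product is guaranteed by the semi-direct product Proposition of \cite{Molnar} recalled above.

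The heart of the argument is the bijectivity of $\omega_H$, and it is here that both hypotheses on $K$ become indispensable. Since $K$ is algebraically closed, every cocommutative $K$-coalgebra is pointed, so the coradical of $H$ equals the group algebra $K[G_H]$ of its grouplike elements. Since $K$ has characteristic zero, the irreducible component of $H$ containing $1_H$ is a connected cocommutative Hopf algebra, which by Milnor--Moore \cite{Milnor} is identified, through the inclusion of its primitives, with $U(L_H)$. Kostant's theorem then assembles these two pieces into an isomorphism $H\cong U(L_H)\# K[G_H]$ realised by the multiplication map with the conjugation action; this map is exactly $\omega_H$, whence $\omega_H$ is bijective. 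Rather than reproving this decomposition I would cite it in the form given in \cite{Sweedler} (page 279 together with Lemma 8.0.1(c)).

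Once $\omega_H$ is known to be an isomorphism of Hopf algebras, the split exact sequence follows formally. Lemma \ref{Molnar} attaches to the semi-direct product $U(L_H)\rtimes K[G_H]$ the canonical split exact sequence with $i_1=id\ot u$, $i_2=u\ot id$ and $p_2=\epsilon\ot id$, satisfying $p_2\circ i_2=id$ and having $U(L_H)$ as the kernel of $p_2$. Transporting this sequence along $\omega_H$ produces the sequence in the statement with $i_H=\omega_H\circ i_1$, $s_H=\omega_H\circ i_2$ and $p_H=p_2\circ\omega_H^{-1}$; the relation $p_H\circ s_H=id$, the kernel property of $i_H$, and exactness are inherited verbatim because $\omega_H$ is an isomorphism.

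The main obstacle is the bijectivity step. It is not a short categorical manipulation but the substantive classical structure theorem, and it is exactly the point at which one uses algebraic closure (to force pointedness, hence that the coradical is the group algebra) and characteristic zero (to control the connected component via Milnor--Moore). Everything around it---that $\omega_H$ is a Hopf algebra morphism and that the resulting short exact sequence is split---is routine given Lemma \ref{Molnar} and Examples \ref{examplesaction}.
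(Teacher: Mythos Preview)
Your proposal is correct and mirrors the paper's treatment: the paper does not give an independent proof but presents the theorem as a reformulation of the classical structure theorem, citing \cite{Sweedler} (page 279 together with Lemma 8.0.1(c)) for the isomorphism $\omega_H$ and invoking Lemma \ref{Molnar} to pass to the split exact sequence. Your write-up is in fact more detailed than the paper's, since you spell out where algebraic closure and characteristic zero enter, but the strategy is identical.
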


\begin{proposition}\label{Kadjoint}
Over an algebraically closed field $K$ of characteristic $0$, the functor $$K[-] \colon \mathbf{Grp} \rightarrow \mathbf{Hopf}_{K,coc}$$ is both a right and a left adjoint to the functor $$\mathcal G \colon \mathbf{Hopf}_{K,coc} \rightarrow \mathbf{Grp}.$$
\end{proposition}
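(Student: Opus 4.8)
The plan is to note that the left adjointness $K[-]\dashv\mathcal G$ has already been recorded in the preliminaries (the functor $\mathcal G$ was observed there to have left adjoint $K[-]$), so the only new content is the right adjointness $\mathcal G\dashv K[-]$. Concretely, for every cocommutative Hopf algebra $H$ and every group $G$ I want a bijection
$$\mathbf{Hopf}_{K,coc}(H,K[G])\;\cong\;\mathbf{Grp}(\mathcal G(H),G),$$
natural in both variables, and I would produce it by exhibiting the projection $p_H\colon H\to K[G_H]$ from the Cartier--Gabriel--Kostant--Milnor--Moore decomposition (Theorem \ref{MM}) as the universal arrow from $H$ to the functor $K[-]$.

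First I would compute the primitives and grouplikes of a group Hopf algebra. Writing $x=\sum_g\lambda_g g\in K[G]$ and comparing coefficients in $\Delta(x)=x\otimes x$ on the one hand, and in $\Delta(x)=x\otimes 1+1\otimes x$ on the other, gives both $\mathcal G(K[G])=G$ and, crucially, $\mathcal P(K[G])=0$. This vanishing is the engine of the argument: any morphism $f\colon H\to K[G]$ sends primitive elements to primitive elements, so $f(L_H)\subseteq\mathcal P(K[G])=0$; since $U(L_H)$ is generated as an algebra by its primitives and $f$ is multiplicative, the composite $f\circ i_H$ must be the zero morphism $u_{K[G]}\circ\epsilon$.

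Next I would invoke that in the split short exact sequence of Theorem \ref{MM} the map $p_H$ is the cokernel of $i_H$, so $f\circ i_H=0$ forces a unique factorization $f=\overline f\circ p_H$ with $\overline f\colon K[G_H]\to K[G]$. Since both source and target of $\overline f$ lie in $\mathbf{GrpHopf}_K$, the equivalence $\mathbf{GrpHopf}_K\simeq\mathbf{Grp}$ identifies $\overline f$ with a unique group homomorphism $\phi\colon G_H=\mathcal G(H)\to G$, whence $f=K[\phi]\circ p_H$. Uniqueness of $\phi$ follows because $p_H$ is a regular epimorphism: if $K[\phi]\circ p_H=K[\psi]\circ p_H$ then $K[\phi]=K[\psi]$, hence $\phi=\psi$. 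This exhibits $p_H$ as a universal arrow from $H$ to $K[-]$, so $\mathcal G\dashv K[-]$ with unit $(p_H)_H$, and naturality of the unit (hence of the bijection above) is automatic from the universal-arrow characterization.

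The step I expect to carry the real weight is the reduction through Theorem \ref{MM}: everything hinges on the decomposition $H\cong U(L_H)\rtimes K[G_H]$, which is precisely where the hypotheses that $K$ is algebraically closed of characteristic $0$ enter, the computation $\mathcal P(K[G])=0$ being itself valid over an arbitrary field. Once the decomposition and this vanishing are in hand, the factorization of $f$ through $p_H$ and its translation into a group homomorphism are routine.
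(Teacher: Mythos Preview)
Your proposal is correct and follows essentially the same route as the paper: both arguments use the Cartier--Gabriel--Kostant--Milnor--Moore split exact sequence to exhibit $p_H$ as the unit of the adjunction $\mathcal G\dashv K[-]$, observe that $f\circ i_H=0$ because a group Hopf algebra has no nonzero primitives, factor $f$ through the cokernel $p_H$, and then invoke the full faithfulness of $K[-]$ (equivalently, the equivalence $\mathbf{GrpHopf}_K\simeq\mathbf{Grp}$) to translate the resulting map into a unique group homomorphism. Your added remarks on where the hypotheses on $K$ are actually used are accurate and a nice touch.
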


\begin{proof}
We already remarked that the functor $K[-]$ is a left adjoint to the functor $\mathcal G$ (even in the general case of any commutative base ring $K$)
\[\xymatrixcolsep{6pc}
\xymatrix{
\mathbf{Grp} \ar@<1ex>[r]^-{K[-]}_-{\perp} & \mathbf{Hopf}_{K,coc} \ar@<1ex>[l]^-{\mathcal G}.
}\] 
Indeed, if $G \in \mathbf{Grp}$, by definition all elements of $G$ are grouplike elements in $K[G]$. Hence there is a canonical inclusion $\eta_G:G \longrightarrow \mathcal{G}\big(K[G]\big)$.
One can easily check that $\eta_{G}$ is natural and universal. In case that $K$ is a field, it is well-known that $\eta_G=id_G$ (see e.g. \cite{GKV}), and the functor $K[-]$ is then fully faithful.

When $K$ is an algebraically closed field of characteristic zero, as stated, the functor $K[-]$ is actually also a right adjoint to the functor $\mathcal G$.
\[\xymatrixcolsep{6pc}
\xymatrix{
\mathbf{Grp} \ar@<1ex>[r]^-{K[-]}_-{\top} & \mathbf{Hopf}_{K,coc} \ar@<1ex>[l]^-{\mathcal G}
}\]
To see this, let $H \in \mathbf{Hopf}_{K,coc}$. We define the $H$-component of the unit of this second adjunction $H \longrightarrow K[\mathcal{G}(H)]$ as the morphism $p_{H}$ defined in Theorem \ref{MM}.

We  clearly have that $p_{H}$ is natural in $H$,
and $p_{H}$ is universal since for every morphism $f\colon H \longrightarrow K[G]$ in the following diagram
   \[\xymatrix{
0 \ar[r]^-{} & U(L_{H}) \ar[rd]_-{}  \ar[r]^-{i_{H}} & H \ar@<0.5ex>[r]^-{p_{H}} \ar[rd]_-{f} & K[G_{H}] \ar[d]^-{\exists ! \overline{f}}  \ar@<0.5ex>[l]^-{s_{H}} \ar[r]^-{} & 0 \\
& & 0 \ar[r]_-{} & K[G] & }\] 
we have that $f\circ i_{H} = 0_{U(L_{H}),K[G]}$ since primitive elements are preserved by Hopf algebra morphisms and there are no non-trivial primitive elements in a group Hopf algebra. By the universal property of the cokernel $p_{H}$, we conclude that there exists a unique morphism $\overline{f}\colon  K[G_{H}] \longrightarrow K[G]$ such that the above diagram commutes. Since the functor $K[ - ] \colon \mathbf{Grp} \rightarrow \mathbf{Hopf}_{K,coc}$ is fully faithful (as explained in the first part of the proof), the required universal property of the unit of the adjunction is satisfied.
\end{proof}

\begin{proposition}\label{functorQ}
Over an algebraically closed field $K$ of characteristic $0$, the functor $$U \colon \mathbf{Lie }_K \rightarrow \mathbf{Hopf}_{K,coc}$$ has both a right adjoint $\Pp$ and a left adjoint $\Qq$. 
\end{proposition}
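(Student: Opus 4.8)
The plan is to treat the two adjoints separately, since the right adjoint is almost immediate while the left one is the substantial part. For the right adjoint, recall from the preliminaries that $U$ is left adjoint to $\Pp$ as functors between $\mathbf{LieAlg}_K$ and $\mathbf{Hopf}_K$, so that for every Lie algebra $L$ and every Hopf algebra $H$ there is a natural bijection $\mathbf{Hopf}_K(U(L),H)\cong \mathbf{LieAlg}_K(L,\Pp(H))$. Since $\mathbf{Hopf}_{K,coc}$ is a \emph{full} subcategory of $\mathbf{Hopf}_K$ containing every $U(L)$ (indeed $U(L)\in \mathbf{PrimHopf}_K$), I would simply restrict this bijection to a cocommutative $H$; this shows that $\Pp$, viewed as a functor $\mathbf{Hopf}_{K,coc}\to \mathbf{LieAlg}_K$, is right adjoint to $U\colon \mathbf{LieAlg}_K\to \mathbf{Hopf}_{K,coc}$. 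No extra hypotheses on $K$ are needed for this half.

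For the left adjoint the plan is to combine the structure theorem with the observation that $U(L)$ has no non-trivial grouplike elements (its only grouplike element is $1$, a standard fact for universal enveloping algebras in characteristic zero). Consequently any Hopf algebra morphism $f\colon H\to U(L)$ must send each grouplike $g\in G_H$ to $1$. Writing $H\cong U(L_H)\rtimes K[G_H]$ as in Theorem \ref{MM}, this forces $f\circ s_H=u_{U(L)}\circ \epsilon_{K[G_H]}$ to be the zero morphism, so that $f$ is completely determined by its restriction $f\circ i_H\colon U(L_H)\to U(L)$, i.e.\ by the Lie algebra morphism $\phi:=\Pp(f)\colon L_H\to L$. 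Moreover, applying $f$ to the conjugation identity $g\cdot x=gxS(g)$ in $H$ (the action of Example \ref{examplesaction}(4)) and using $f(g)=1$ yields $\phi(g\cdot x)=\phi(x)$ for all $g\in G_H$ and $x\in L_H$. Thus $\phi$ must vanish on the Lie ideal $I_H\subseteq L_H$ generated by the elements $g\cdot x-x$. I would therefore \emph{define} $\Qq(H):=L_H/I_H$, and let the candidate unit-adjunction bijection send $f$ to the induced morphism $\overline{\phi}\colon \Qq(H)\to L$.

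It then remains to check that $f\mapsto \overline{\phi}$ is a natural bijection $\mathbf{Hopf}_{K,coc}(H,U(L))\cong \mathbf{LieAlg}_K(\Qq(H),L)$. Injectivity is immediate from the previous paragraph, since $f$ is determined by $\phi$. For surjectivity, given a Lie morphism $\phi\colon L_H\to L$ vanishing on $I_H$, I would set $f(x\ot g):=U(\phi)(x)$, with $U(\phi)\colon U(L_H)\to U(L)$ the image of $\phi$ under the functor $U$, and verify that $f$ is a morphism of Hopf algebras. Compatibility with comultiplication, counit and unit is straightforward from the formulas for $\Delta_{A\rtimes B}$ and $\epsilon_{A\rtimes B}$; the only point that genuinely uses the hypothesis is multiplicativity, where the smash-product rule $(x\ot g)(x'\ot g')=x(g\cdot x')\ot gg'$ produces a factor $U(\phi)(g\cdot x')$ that must equal $U(\phi)(x')$. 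This equality holds because both $x'\mapsto U(\phi)(g\cdot x')$ and $x'\mapsto U(\phi)(x')$ are algebra morphisms $U(L_H)\to U(L)$ agreeing on the generating primitives $L_H$, precisely because $\phi(g\cdot x-x)=0$. Naturality in $L$ follows from $\Pp(U(\psi))=\psi$ for $\psi\colon L\to L'$, while naturality in $H$ (and functoriality of $\Qq$) follows from the fact that any Hopf morphism $h\colon H'\to H$ intertwines the conjugation actions, so that $\Pp(h)$ maps $I_{H'}$ into $I_H$ and descends to $\Qq(h)\colon \Qq(H')\to \Qq(H)$.

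The main obstacle I anticipate is not a single hard computation but rather assembling these pieces so that the reduction of $\mathbf{Hopf}_{K,coc}(H,U(L))$ to $G_H$-invariant Lie morphisms is a genuine bijection: both the vanishing of the grouplike part and the reconstruction of $f$ from $\phi$ rely essentially on the explicit smash-product description of $H$ furnished by Theorem \ref{MM}, and one must check carefully that passing to the quotient $L_H/I_H$ is compatible with all morphisms. Conceptually, the cleanest way to organise the argument is to observe that $\Qq$ exhibits $\mathbf{PrimHopf}_K$ as a reflective subcategory of $\mathbf{Hopf}_{K,coc}$, with reflection $H\mapsto U(L_H/I_H)$, after which $\Qq$ is this reflection followed by the equivalence $\mathbf{PrimHopf}_K\simeq \mathbf{LieAlg}_K$.
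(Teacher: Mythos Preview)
Your proposal is correct and follows essentially the same route as the paper: both use the Cartier--Gabriel--Kostant--Milnor--Moore decomposition to reduce a morphism $H\to U(L)$ to a Lie algebra morphism $L_H\to L$ satisfying $\phi(g\cdot x)=\phi(x)$, and then quotient $L_H$ by this relation. The only cosmetic difference is that the paper phrases the quotient in terms of the congruence generated by the pairs $(x,g\cdot x)$ while you use the equivalent language of the Lie ideal generated by $g\cdot x-x$; your version is in fact more detailed in checking the inverse bijection and naturality than the paper's own proof.
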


\begin{proof}
It is well-known, and recalled above, that the {\em primitive elements} functor $\Pp$ is a right adjoint for the {\em universal enveloping} functor {$U \colon \mathbf{Lie }_K \rightarrow \mathbf{Hopf}_{K,coc}$}.

To construct the left adjoint $\Qq$, take any cocommutative Hopf algebra $H$ and write $H\cong U(L_H)\rtimes K[G]$. Consider the (set-theoretic) equivalence relation $R$ on $L_H$ given by $(x,y)\in R$ if and only if $g\cdot x=y$ for some $g\in G_H$. One can then construct, in the variety of Lie algebras, the smallest {congruence (i.e. equivalence relation compatible with the Lie algebra operations) $\ol R$ containing $R$}. 
Let us call $\Qq(H)$ the quotient of $L_H$ under the equivalence relation $\overline{R}$, then 
by \cite[Theorem 6.12]{Burris} the canonical Lie algebra morphism $F_1:L_H\to \Qq(H)$ satisfies the property that any morphism of Lie algebras $F:L_H\to L$ will factor through $F_1$ if and only if 
$(x,y)\in \ol R$ implies $F(x)=F(y)$.

Now consider any Lie algebra $L$ and a Hopf algebra morphism $f:H\to L$. Since $f$ is completely determined by the image of the grouplike and primitive elements and $U(L)$ has only one grouplike element (to know, the unit), $f$ is completely determined by the Lie algebra morphism $F:=\Pp(f):L_H\to L$. Furthermore, for any $x\in L_H$ and $g\in G_H$, we find 
\begin{eqnarray*}
f((1\ot g)(x\ot 1))&=&f(1\ot g)f(x\ot 1)=F(x)\\
=f(g\cdot x\ot g)&=&f(g\cdot x\ot 1)f(1\ot g)=F(g\cdot x)
\end{eqnarray*}
Hence we find by the above that $F$ factors through $F_1$ {and, accordingly,} we have shown that 
$${\rm Hom}_{\bf Hopf}(H,U(L))\cong {\rm Hom}_{\bf Lie}(\Qq(H),L),$$
which means that $\Qq$ defines a left adjoint for the universal enveloping functor.
\end{proof}

\subsection{Compatibility condition on semi-direct products}
 We now give an algebraic condition that will be useful to guarantee the existence of a Hopf algebra morphism between semi-direct products.
 
First remark that a semi-direct product of cocommutative Hopf algebras $A\rtimes B$, is generated as an algebra by the images of the Hopf algebra morphisms $i_1:A\to A\rtimes B$ and $i_2:B\to A\rtimes B$. Indeed, it is enough to observe that 
$$(a\ot b)=(a\ot 1_B)(1_A\ot b)$$
in $A\rtimes B$. In fact, this property might be seen as a consequence of \cite[Proposition 11]{protomodularity}, which implies that given any split short exact sequence in a pointed protomodular category
 \[\xymatrixcolsep{2pc}\xymatrix{
0 \ar[r]^-{} &  A \ar[r]^-{k}  & C \ar@<0.5ex>[r]^-{f} & B  \ar@<0.5ex>[l]^-{s} \ar[r] & 0 }\] the pair of morphisms $(k,s)$ is jointly epimorphic.
 
The next propostition shows that the equivalence between split extensions and actions for Hopf algebras is also valid on the morphism-level.
 
\begin{proposition}\label{recombine}
Consider the solid diagram in the category  $\mathbf{Hopf}_{K,coc}$
\[\xymatrixcolsep{5pc}\xymatrix{
0 \ar[r]^-{} &  {H_{1}} \ar[r]^-{i_{1}} \ar[d]_-{f} & H_1 \rtimes H_2  \ar[r]_-{} \ar@{.>}[d]_-{h}  &{H_{2}} \ar[d]^-{g} \ar[r]^-{} \ar@/_/[l]_-{i_{2}}& 0 \\
0 \ar[r]^-{} &  {F_{2}} \ar[r]& F_1 \rtimes F_2 \ar[r]_-{}   & F_2 \ar[r]^-{} \ar@/_/[l]& 0 }\]
where the sequences are split exact. We write $\cdot_{H}$ for the action of $H_{2}$ on $H_{1}$ and $\cdot_{F}$ for the action of $F_{2}$ on $F_{1}$. Then there exists a morphism $h\colon H_1 \rtimes H_2 \rightarrow F_1 \rtimes F_2 $ making the diagram commute if and only if $f$ and $g$ are compatible in the following sense:
\begin{equation*}
 f(y\cdot_{H}x)=g(y)\cdot_{F}f(x) \hspace*{0.25cm} \forall x\in H_{1},  \forall y\in H_{2}.
\end{equation*}
If these equivalent conditions hold, then $h=f \otimes g$.
\end{proposition}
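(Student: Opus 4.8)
The plan is to prove the equivalence by treating the two directions separately, and then to verify that the connecting morphism, when it exists, is forced to be $f\ot g$. Throughout I will use the fact recalled just before the statement: a semi-direct product $H_1\rtimes H_2$ is generated as an algebra by the images of $i_1$ and $i_2$, since $(x\ot y)=(x\ot 1_{H_2})(1_{H_1}\ot y)=i_1(x)\,i_2(y)$. This joint-epimorphicity of $(i_1,i_2)$ is the engine that makes everything rigid: any candidate morphism $h$ is determined on a generating set, so once we know what $h$ must do to $i_1(x)$ and $i_2(y)$, it is determined completely, and the only question is whether the prescribed assignment respects the multiplication.

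For the forward direction, I would assume $h$ exists making the diagram commute. Commutativity of the left square gives $h(i_1(x)) = i_1'(f(x))$ and commutativity of the right square (together with the splittings) gives $h(i_2(y)) = i_2'(g(y))$, where $i_1', i_2'$ denote the structure injections into $F_1\rtimes F_2$. The key computation is to evaluate $h$ on the product $i_2(y)\,i_1(x)$ in two ways. On one side, since $h$ is an algebra morphism, $h(i_2(y)\,i_1(x)) = i_2'(g(y))\,i_1'(f(x))$. On the other side I would rewrite $i_2(y)\,i_1(x)$ inside $H_1\rtimes H_2$ using the smash multiplication rule $M(a\ot b\ot a'\ot b')=a(b_1\cdot a')\ot b_2 b'$: this expresses $(1_{H_1}\ot y)(x\ot 1_{H_2})$ as $(y_1\cdot_H x)\ot y_2$, and then applying $h$ and matching with the first computation, using cocommutativity and the counit axioms to strip off the grouplike/coalgebra bookkeeping, isolates the identity $f(y\cdot_H x)=g(y)\cdot_F f(x)$.

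For the converse, I would assume the compatibility condition and simply \emph{define} $h:=f\ot g$ on underlying vector spaces, then check it is a Hopf algebra morphism. That $f\ot g$ is a coalgebra morphism and respects unit and counit is immediate from the product form of $\Delta_{A\rtimes B}$, $u_{A\rtimes B}$ and $\epsilon_{A\rtimes B}$ given in the Proposition on semi-direct products, together with the fact that $f$ and $g$ are individually Hopf morphisms. The substantive point is multiplicativity: one must verify $(f\ot g)\big(M_{H_1\rtimes H_2}(x\ot y\ot x'\ot y')\big)=M_{F_1\rtimes F_2}\big((f\ot g)(x\ot y)\ot(f\ot g)(x'\ot y')\big)$, i.e. that $f(x\,(y_1\cdot_H x'))\ot g(y_2 y') = f(x)\,(g(y_1)\cdot_F f(x'))\ot g(y_2)g(y')$. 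Expanding the left side with $f$ multiplicative and applying the compatibility condition to the inner term $f(y_1\cdot_H x')=g(y_1)\cdot_F f(x')$ collapses it to the right side. Commutativity of the two squares in the diagram is then read off directly from the formulas $i_1=id\ot u$, $i_2=u\ot id$ and the projection $p_2=\epsilon\ot id$.

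The main obstacle I anticipate is the forward direction, specifically the Sweedler-notation manipulation needed to extract a \emph{clean} compatibility identity from $h(i_2(y)i_1(x))$: because the comultiplication of $H_2$ intervenes through the smash rule, one gets $h\big((y_1\cdot_H x)\ot y_2\big)$ rather than something immediately of the form $f(y\cdot_H x)\ot g(y)$, and I will need to apply $(id\ot\epsilon)$ (or compose with the counit on the $H_2$-leg) and invoke the counit axiom $(\epsilon\ot id)\Delta=id$ to recover the un-split identity in $F_1$. The converse, by contrast, is a direct and essentially formal verification once the compatibility hypothesis is plugged into the smash multiplication formula.
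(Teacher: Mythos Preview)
Your proposal is correct and follows essentially the same approach as the paper. The only organizational difference is that the paper first establishes $h=f\ot g$ from the generation property and then proves in one stroke that $f\ot g$ is multiplicative \emph{if and only if} the compatibility condition holds (by writing out both sides of the multiplicativity identity for a general product and specializing via $x=1_{H_1}$, $y'=1_{H_2}$, $id\ot\epsilon$), whereas you treat the two implications separately; your forward computation on $i_2(y)i_1(x)$ is exactly the paper's specialization, and your converse is identical to theirs. Cocommutativity is not actually needed in the forward extraction step---applying $id\ot\epsilon$ and the counit axiom for $g$ suffices---so you can drop that remark.
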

\begin{proof}
First remark that, by the observation just before this Proposition, if a Hopf algebra morphism $h$ as in the diagram exists, then necessarily
$$ h(a\ot b)=h(a\ot 1_{H_2})h(1_{H_1}\ot b)=(f(a)\ot 1_{F_2})(1_{F_2} \ot g(b))=f(a)\ot g(b),$$
for any $a \ot b \in H_1 \ot H_2$. This shows that $h=f\ot g$.

Furthermore, given any coalgebra morphisms $f$ and $g$, the map $f\otimes g$ is a coalgebra morphism since the
coalgebra structure of the semi-direct product of two cocommutative Hopf algebras is given by the tensor product coalgebra (i.e. the categorical product in the category of cocommutative coalgebras).

Finally, $f\otimes g$ is an algebra morphism if and only if for all $x,x'\in H_{1}$, $y,y'\in H_{2}$, we have: 
\begin{eqnarray*}
   (f\otimes g)\big((x\otimes y)(x'\otimes y')\big) &=& \big((f\otimes g)(x\otimes y)\big)\big((f\otimes g)(x'\otimes y')\big).
   \end{eqnarray*}
Let us compute the left and right hand side of this expression, using the explicit form of the multiplication in the semi-direct product and the fact that $f$ and $g$ are Hopf algebra morphisms. We find that
\begin{eqnarray*}
(f\otimes g)\big((x\otimes y)(x'\otimes y')\big) &=&(f\otimes g)\big(x(y_{1}\cdot_{H}x')\otimes y_{2}y'\big)\\
&=&f\big(x(y_{1}\cdot_{H}x')\big)\otimes g(y_{2}y')\\
&=& f(x)f\big(y_{1}\cdot_{H}x'\big)\otimes g(y_{2})g(y')
\end{eqnarray*}
and 
\begin{eqnarray*}
\big((f\otimes g)(x\otimes y)\big)\big((f\otimes g)(x'\otimes y')\big)
&=& \big(f(x)\otimes g(y)\big)\big(f(x')\otimes g(y')\big) \\
&=& f(x)\big(g(y)_{1}\cdot_{F}f(x')\big)\otimes g(y)_{2}g(y')\\
&=& f(x)\big(g(y_{1})\cdot_{F}f(x')\big)\otimes g(y_{2})g(y').
\end{eqnarray*}
Therefore $f\ot g$ is an algebra morphism if and only if
$$f(x)f\big(y_{1}\cdot_{H}x'\big)\otimes g(y_{2})g(y')=f(x)\big(g(y_{1})\cdot_{F}f(x')\big)\otimes g(y_{2})g(y')$$
for all $x,x'\in H_1$ and $y,y'\in H_2$.
If the compatibility condition holds, then clearly this condition is satisfied. Conversely, take $x=1_{H_1}$ and $y'=1_{H_2}$ and apply $id\otimes \epsilon$ to the above identity to obtain the compatibility condition.
\end{proof}

\section{Split extension classifiers in $\mathbf{Hopf}_{K,coc}$}\label{construction}

The aim of this section is to construct the \emph{split extension classifier} $[H]$ for a given cocommutative Hopf algebra $H$ in $\Hopf_{K,coc}$ over an algebraically closed field of characteristic zero $K$. Thanks to the decomposition theorem for these Hopf algebras, we only have to identify the grouplike and primitive elements of $[H]$. It might be not a big surprise that $\Gg([H])$ will turn out to be exactly the group $\Aut_{\sf Hopf}(H)$  of Hopf algebra automorphisms of $H$. The primitive elements of $[H]$ turn out to be what we will call Hopf derivations.

\subsection{Hopf derivations}

\begin{definition}
Let $H$ be a Hopf algebra. A {\em Hopf derivation} on $H$ is a linear endomorphism $\psi$ of $H$ which is at the same time a derivation on $H$, i.e. $\psi$ satisfies the \textit{Leibniz rule} 
$$\psi\circ M = M\circ (\psi\otimes id + id\otimes \psi)$$ 
and a coderivation on $H$, i.e. it satifies the \textit{co-Leibniz rule} 
$$\Delta \circ \psi = (\psi\otimes id + id\otimes \psi)\circ \Delta.$$
The set of all Hopf derivations on $H$ is denoted by $\Der_{\sf Hopf}(H)$.
\end{definition}
Given elements $x,y\in H$, we can express the Leibniz and co-Leibniz rules for a linear map $\psi:H\to H$ respectively as
\begin{eqnarray*}
\psi(xy) &=& \psi(x)y + x\psi(y);\\
\psi(x)_1\ot \psi(x)_2 &=& \psi(x_1)\ot x_2 + x_1\ot \psi(x_2).
\end{eqnarray*}

It is well known that the derivations of an algebra form a Lie algebra by means of the commutator bracket. We start this section with the observation that Hopf derivations also form a Lie algebra.

\begin{lemma}
Let $H$ be a Hopf algebra. Then $\BiDer (H)$ is a Lie algebra for the commutator bracket 
$$[\psi_1,\psi_2] = \psi_1\circ \psi_2 - \psi_2\circ \psi_1 \hspace*{4mm}\forall \psi_1, \psi_2 \in \BiDer(H).$$
\end{lemma}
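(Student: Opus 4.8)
The plan is to exploit the fact that the bracket in question is simply the commutator in the associative $K$-algebra $\mathrm{End}_K(H)$ of all linear endomorphisms of $H$. On $\mathrm{End}_K(H)$ the commutator $[\psi_1,\psi_2]=\psi_1\circ\psi_2-\psi_2\circ\psi_1$ is well known to define a Lie algebra structure: antisymmetry is immediate from the definition, and the Jacobi identity follows formally from the associativity of composition. Consequently, to prove the statement it suffices to show that $\BiDer(H)$ is a \emph{Lie subalgebra} of $\mathrm{End}_K(H)$, that is, a $K$-linear subspace which is closed under the commutator bracket.

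That $\BiDer(H)$ is a subspace is clear: the Leibniz rule and the co-Leibniz rule are each linear conditions on $\psi$, so any $K$-linear combination of derivations is again a derivation, any linear combination of coderivations is again a coderivation, and $\BiDer(H)$, being the intersection of these two subspaces, is itself a subspace of $\mathrm{End}_K(H)$.

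The heart of the argument is the closure of $\BiDer(H)$ under the bracket. Since a Hopf derivation is by definition simultaneously a derivation and a coderivation, and these two conditions are independent of one another, it is enough to check separately that (i) the commutator of two derivations is a derivation, and (ii) the commutator of two coderivations is a coderivation. Part (i) is the classical computation: expanding $\psi_1\psi_2(xy)$ and $\psi_2\psi_1(xy)$ by the Leibniz rule, the mixed terms $\psi_i(x)\psi_j(y)$ with $i\neq j$ appear identically in both expansions and cancel upon subtraction, leaving exactly $[\psi_1,\psi_2](x)\,y+x\,[\psi_1,\psi_2](y)$. Part (ii) is the formal dual of (i). Writing $L_i=\psi_i\otimes id$ and $R_i=id\otimes\psi_i$, the co-Leibniz rule reads $\Delta\circ\psi_i=(L_i+R_i)\circ\Delta$; iterating gives $\Delta\circ\psi_1\circ\psi_2=(L_1+R_1)(L_2+R_2)\circ\Delta$, and similarly with the indices swapped. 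Subtracting and using that $L_i$ and $R_j$ commute for all $i,j$ (they act on different tensor factors), the cross terms $L_1R_2-R_2L_1$ and $R_1L_2-L_2R_1$ vanish, while $L_1L_2-L_2L_1=[\psi_1,\psi_2]\otimes id$ and $R_1R_2-R_2R_1=id\otimes[\psi_1,\psi_2]$ survive. This yields exactly $\Delta\circ[\psi_1,\psi_2]=\big([\psi_1,\psi_2]\otimes id+id\otimes[\psi_1,\psi_2]\big)\circ\Delta$, which is the co-Leibniz rule for $[\psi_1,\psi_2]$.

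There is no serious obstacle here: once one observes that antisymmetry and the Jacobi identity are inherited for free from $\mathrm{End}_K(H)$, the only content is closure, and closure reduces to the two elementary cancellations above. The mildest point requiring care is the bookkeeping in part (ii), namely the observation that operators acting on distinct legs of the tensor product commute; this is precisely what forces the unwanted mixed terms to disappear and makes the coderivation computation the exact dual of the derivation computation.
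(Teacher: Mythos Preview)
Your proof is correct and follows essentially the same approach as the paper: both reduce the problem to showing that the commutator of two derivations is a derivation (classical) and that the commutator of two coderivations is a coderivation (the dual computation), with the Lie algebra axioms inherited from $\mathrm{End}_K(H)$. Your operator notation $L_i,R_i$ packages the coderivation computation a bit more cleanly, but the underlying cancellation is identical to the paper's explicit expansion.
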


\begin{proof}
As already mentioned, it is well-known that $[\psi_1,\psi_2]$ is again a derivation on $H$ for all derivations $\psi_1$ and $\psi_2$. 
By duality, it follows that $[\psi_1,\psi_2]$ is a coderivation for all coderivations $\psi_1$ and $\psi_2$. Explicitly, this is shown by the following computation:
\begin{eqnarray*}
\Delta \circ [\psi_1,\psi_2] &=& \Delta \circ (\psi_{1} \circ \psi_{2} - \psi_{2} \circ \psi_{1}) \\
& = & (\Delta \circ \psi_{1}) \circ \psi_{2} - (\Delta \circ \psi_{2}) \circ \psi_{1}\\
&=& (\psi_{1} \otimes id + id\otimes \psi_{1}) \circ (\psi_{2} \otimes id + id\otimes \psi_{2}) \circ \Delta \\
&& - (\psi_{2} \otimes id + id\otimes \psi_{2}) \circ (\psi_{1} \otimes id + id\otimes \psi_{1}) \circ \Delta \\
&=& \big((\psi_{1} \circ \psi_{2})\otimes id + \psi_{1}\otimes \psi_{2} + \psi_{2}\otimes \psi_{1} + id\otimes (\psi_{1} \circ \psi_{2})\big)\circ \Delta \\
&&- \big((\psi_{2} \circ \psi_{1})\otimes id + \psi_{2}\otimes \psi_{1} + \psi_{1}\otimes \psi_{2} + id\otimes (\psi_{2} \circ \psi_{1})\big)\circ \Delta \\
&=&  \big((\psi_{1} \circ \psi_{2})\otimes id + id\otimes (\psi_{1} \circ \psi_{2})\big)\circ \Delta\\
&&- \big((\psi_{2} \circ \psi_{1})\otimes id + id\otimes (\psi_{2} \circ \psi_{1})\big)\circ \Delta\\
&=& \big((\psi_{1} \circ \psi_{2} - \psi_{2} \circ \psi_{1})\otimes id + id \otimes (\psi_{1} \circ \psi_{2} - \psi_{2} \circ \psi_{1})\big)\circ \Delta\\
&=& \big([\psi_1,\psi_2]\otimes id + id \otimes [\psi_1,\psi_2]\big)\circ \Delta.
\end{eqnarray*}
Hence we find that, in particular, $\BiDer(H)$ is a Lie algebra for the commutator bracket.
\end{proof}

The following known result for derivations and coderivations will be useful. 

\begin{lemma}\label{derivationunit}
Let $H$ be a Hopf algebra and $\psi\in\BiDer(H)$. Then $\psi\circ u=0$ and $\epsilon\circ \psi=0$.
\end{lemma}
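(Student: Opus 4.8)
The plan is to treat the two claims separately, exploiting the duality between the Leibniz and co-Leibniz rules: the second statement is the formal dual of the first, so the arguments will mirror each other.

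For the identity $\psi\circ u=0$, I would first observe that it suffices to show $\psi(1_H)=0$, since $u$ is $K$-linear with $u(\lambda)=\lambda 1_H$, whence $\psi(u(\lambda))=\lambda\psi(1_H)$. To prove $\psi(1_H)=0$, I would apply the Leibniz rule to the pair $(1_H,1_H)$. Since $1_H\cdot 1_H=1_H$, the rule $\psi(xy)=\psi(x)y+x\psi(y)$ gives
$$\psi(1_H)=\psi(1_H)\,1_H+1_H\,\psi(1_H)=2\,\psi(1_H),$$
and subtracting yields $\psi(1_H)=0$. This step works over any base field, as it only requires cancellation, not division by $2$.

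For the identity $\epsilon\circ\psi=0$, I would dualize: apply $\epsilon\ot\epsilon$ to both sides of the co-Leibniz rule $\Delta\circ\psi=(\psi\ot id+id\ot\psi)\circ\Delta$. Using the counitality consequence $(\epsilon\ot\epsilon)\circ\Delta=\epsilon$, the left-hand side evaluated at $x$ becomes $\epsilon(\psi(x))$. For the right-hand side, I would expand $(\epsilon\ot\epsilon)$ applied to $\psi(x_1)\ot x_2+x_1\ot\psi(x_2)$, obtaining $\epsilon(\psi(x_1))\epsilon(x_2)+\epsilon(x_1)\epsilon(\psi(x_2))$, and then use counitality (that is, $\epsilon(x_1)x_2=x=x_1\epsilon(x_2)$) together with linearity of $\psi$ and $\epsilon$ to collapse each summand to $\epsilon(\psi(x))$. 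This gives $\epsilon(\psi(x))=2\,\epsilon(\psi(x))$, hence $\epsilon(\psi(x))=0$.

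I do not anticipate a genuine obstacle here, as both parts reduce to a one-line cancellation once the correct maps are composed; the only point requiring slight care is the clean bookkeeping of Sweedler indices when applying $\epsilon\ot\epsilon$ to the co-Leibniz identity, to ensure that both terms genuinely reduce to $\epsilon\circ\psi$ via the counit axioms.
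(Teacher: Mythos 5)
Your proof is correct and follows essentially the same route as the paper: the first identity via the Leibniz rule on $1_H=1_H1_H$ and cancellation, the second by applying $\epsilon\ot\epsilon$ to the co-Leibniz rule and using counitality to collapse both terms to $\epsilon\circ\psi$ (the paper phrases this as $\epsilon(\psi(z))=\epsilon\big(\psi(z)_1\epsilon(\psi(z)_2)\big)$ before expanding, which is the same computation). No gaps.
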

\begin{proof}
Let $u(1_K)=1_H\in H$ be the unit of $H$. When $\psi$ is a derivation we find that
$$\psi(1_{H}) = \psi(1_{H}1_{H}) = \psi(1_H)1_{H} + 1_{H}\psi(1_H) = \psi(1_H) + \psi(1_H),$$
hence $\psi(1_H)=0$ and $\psi\circ u=0$. Similarly, since $\psi$ is a coderivation, we have for all $z\in H$, 
\begin{eqnarray*}
\epsilon\big(\psi(z)\big) &=& \epsilon\Big(\psi(z)_{1}\epsilon\big(\psi(z)_{2}\big)\Big)\\
&=& \epsilon\Big(\psi(z_{1})\epsilon(z_{2}) + z_{1}\epsilon\big(\psi(z_{2})\big)\Big)\\
&=& \epsilon\Big(\psi(z_{1})\epsilon(z_{2})\Big) + \epsilon\Big(z_{1}\epsilon\big(\psi(z_{2})\big)\Big)\\
&=& \epsilon\big(\psi(z)\big) + \epsilon\big(\psi(z)\big)
\end{eqnarray*}
It follows that $\epsilon\circ \psi(z)=0$.
\end{proof}
\subsection{Construction of an action of $K[\Aut_{\sf Hopf}(H)]$ on $U\big(\BiDer(H)\big)$}

Given a Hopf algebra $H$, denote by $\Aut_{\sf Hopf}(H)$ the group of \textit{Hopf algebra automorphisms} of $H$, i.e. the group of bialgebra automorphisms (= bijective endomorphisms) of $H$.

\begin{lemma}\label{rho}
For any $\phi\in \Aut_{\sf Hopf}(H)$ and $\psi\in \BiDer(H)$, the linear endomorphism 
$\phi\cdot \psi:=\phi \circ \psi \circ \phi^{-1}$ is a Hopf derivation on $H$, i.e. there is a map
$$\rho: \Aut_{\sf Hopf}(H)\times \BiDer(H) \longrightarrow \BiDer(H),\quad \rho (\phi,\psi) := \phi \circ \psi \circ \phi^{-1}.$$ 
\end{lemma}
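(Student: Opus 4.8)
The plan is to verify directly that $\phi\cdot\psi=\phi\circ\psi\circ\phi^{-1}$ satisfies both the Leibniz rule and the co-Leibniz rule, using only that $\phi$ is a bialgebra automorphism (hence preserves multiplication, unit, comultiplication, and counit, along with its inverse $\phi^{-1}$) and that $\psi$ is itself a Hopf derivation. Since $\phi\cdot\psi$ is visibly a linear endomorphism of $H$ (being a composite of three linear maps), the only content of the lemma is that this composite again lies in $\BiDer(H)$; the map $\rho$ is then well defined by construction.

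For the Leibniz rule, I would compute $(\phi\cdot\psi)\circ M$ and rewrite it as $\phi\circ\psi\circ M\circ(\phi^{-1}\otimes\phi^{-1})$, using that $\phi^{-1}$ is an algebra morphism so that $M\circ(\phi^{-1}\otimes\phi^{-1})=\phi^{-1}\circ M$. Applying the Leibniz rule for $\psi$ turns $\psi\circ M$ into $M\circ(\psi\otimes id+id\otimes\psi)$, and then reinserting $\phi$ via the fact that $\phi$ is an algebra morphism ($\phi\circ M=M\circ(\phi\otimes\phi)$) lets me collect the result into the form $M\circ\big((\phi\cdot\psi)\otimes id+id\otimes(\phi\cdot\psi)\big)$, where the stray $\phi\circ\phi^{-1}=id$ factors cancel. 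The co-Leibniz rule is entirely dual: I would start from $\Delta\circ(\phi\cdot\psi)$, push $\Delta$ past $\phi$ using that $\phi$ is a coalgebra morphism ($\Delta\circ\phi=(\phi\otimes\phi)\circ\Delta$), apply the co-Leibniz rule for $\psi$, and then use that $\phi^{-1}$ is a coalgebra morphism to absorb the trailing $\phi^{-1}$ into each tensor slot, again exploiting $\phi^{-1}\circ\phi=id$.

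These two computations are routine bookkeeping rather than genuine obstacles, so I do not expect any real difficulty. The one point requiring a small amount of care is the systematic use of the fact that the \emph{inverse} of a bialgebra automorphism is again a bialgebra morphism (so that $\phi^{-1}$ is simultaneously an algebra and a coalgebra morphism); this is what allows the conjugation to respect both the algebra and coalgebra structures symmetrically. Since antipode preservation is automatic for Hopf algebra morphisms and Lemma \ref{derivationunit} already records the compatibility of Hopf derivations with unit and counit, no separate verification of those conditions is needed, and the lemma follows.
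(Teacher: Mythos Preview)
Your proposal is correct and follows essentially the same approach as the paper: both verify the Leibniz rule using that $\phi$ and $\phi^{-1}$ are algebra morphisms together with the Leibniz rule for $\psi$, then handle the co-Leibniz rule by the dual argument using that $\phi$ and $\phi^{-1}$ are coalgebra morphisms. The only cosmetic difference is that the paper computes the derivation part elementwise and the coderivation part diagrammatically, whereas you phrase both diagrammatically; the content is identical.
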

\begin{proof}
Let us first check that $\phi \circ \psi \circ \phi^{-1}$ is indeed a derivation. 
For any $x,y\in H$ we find
\begin{eqnarray*}
(\phi \circ \psi \circ \phi^{-1}) (xy) & = & (\phi \circ \psi) \big(\phi^{-1}(x)\phi^{-1}(y)\big)\\
&=&\phi\Big(\psi\big(\phi^{-1}(x)\big)\phi^{-1}(y)+\phi^{-1}(x)\psi\big(\phi^{-1}(y)\big)\Big) \\
& = & (\phi \circ \psi \circ \phi^{-1})(x)y+x(\phi \circ \psi \circ \phi^{-1})(y).
\end{eqnarray*}
In the above equalities, we have used the fact that $\phi$ and $\phi^{-1}$ are algebra morphisms, and that $\psi$ is a derivation. 
Again, by duality, it follows that  $\phi \circ \psi \circ \phi^{-1}$ is a coderivation as well. Let us give the explicit proof for sake of clarity:
\begin{eqnarray*}
\Delta \circ (\phi \cdot \psi) & = & \Delta \circ \phi \circ \psi \circ \phi^{-1}\\
&=& (\phi \otimes \phi)\circ \Delta \circ \psi \circ \phi^{-1}\\
&=& (\phi \otimes \phi) \circ (\psi \otimes id + id\otimes \psi)\circ \Delta \circ \phi^{-1}\\
&=& (\phi \otimes \phi) \circ (\psi \otimes id + id\otimes \psi)\circ (\phi^{-1}\otimes \phi^{-1})\circ \Delta \\
&=& \big((\phi \circ \psi \circ \phi^{-1})\otimes id + id\otimes  (\phi \circ \psi \circ \phi^{-1})\big)\circ \Delta\\
&=& \big((\phi \cdot \psi)\otimes id + id \otimes (\phi \cdot \psi)\big)\circ \Delta
\end{eqnarray*}
In the above equalities, we have used the fact that $\phi$ and $\phi^{-1}$ are coalgebra morphisms, and that $\psi$ is a coderivation.
\end{proof}

\begin{lemma}\label{olrho}
The map $\rho$ from Lemma~\ref{rho} induces a linear map
$$\overline{\rho}: K[\Aut_{\sf Hopf}(H)]\otimes U\big(\BiDer(H)\big) \longrightarrow U\big(\BiDer(H)\big)$$
defined by $ \overline{\rho} (\phi \otimes \psi) := \rho(\phi,\psi)= \phi \circ \psi \circ \phi^{-1}$ for $\phi\in \Aut_{\sf Hopf}(H)$ and $\psi\in \BiDer(H)$.
\end{lemma}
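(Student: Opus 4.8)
The plan is to reduce the statement to the universal property of the enveloping functor $U$: for each fixed automorphism $\phi$, conjugation by $\phi$ is a Lie algebra endomorphism of $\BiDer(H)$, so it lifts canonically to an algebra endomorphism of $U\big(\BiDer(H)\big)$, and the desired map $\overline{\rho}$ is then assembled from these lifts by bilinearity over the tensor product.

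First I would fix $\phi\in\Aut_{\sf Hopf}(H)$ and examine the map $\rho(\phi,-)\colon\BiDer(H)\to\BiDer(H)$, $\psi\mapsto\phi\circ\psi\circ\phi^{-1}$. Lemma~\ref{rho} already guarantees that this takes values in $\BiDer(H)$, so the only point to verify is that it respects the commutator bracket. This is immediate from inserting $\phi^{-1}\circ\phi=id$ between the two factors: since composition in $\BiDer(H)$ is the relevant product,
\[
\phi\circ[\psi_1,\psi_2]\circ\phi^{-1}
=\big(\phi\circ\psi_1\circ\phi^{-1}\big)\big(\phi\circ\psi_2\circ\phi^{-1}\big)-\big(\phi\circ\psi_2\circ\phi^{-1}\big)\big(\phi\circ\psi_1\circ\phi^{-1}\big)
=[\rho(\phi,\psi_1),\rho(\phi,\psi_2)].
\]
Hence $\rho(\phi,-)$ is a Lie algebra endomorphism of $\BiDer(H)$ (indeed an automorphism, with inverse $\rho(\phi^{-1},-)$).

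Next I would invoke the universal property of $U\colon\mathbf{Lie}_K\to\mathbf{Hopf}_{K,coc}$. Composing $\rho(\phi,-)$ with the canonical inclusion $\BiDer(H)\hookrightarrow U\big(\BiDer(H)\big)$ yields a Lie algebra morphism, which therefore extends uniquely to an algebra morphism $U(\rho(\phi,-))\colon U\big(\BiDer(H)\big)\to U\big(\BiDer(H)\big)$ that agrees with $\rho(\phi,-)$ on primitive elements; in particular $U(\rho(\phi,-))(\psi)=\phi\circ\psi\circ\phi^{-1}$ for every $\psi\in\BiDer(H)$. Finally, using that the group elements form a $K$-basis of $K[\Aut_{\sf Hopf}(H)]$, I would define the map $K[\Aut_{\sf Hopf}(H)]\times U\big(\BiDer(H)\big)\to U\big(\BiDer(H)\big)$ by $\big(\sum_i c_i\phi_i,\,u\big)\mapsto\sum_i c_i\,U(\rho(\phi_i,-))(u)$. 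This is linear in the second variable because each $U(\rho(\phi_i,-))$ is an algebra map, and linear in the first by construction, hence bilinear, so it descends to the required linear map $\overline{\rho}$ on the tensor product; on a generator $\phi\otimes\psi$ with $\phi$ grouplike and $\psi$ primitive it returns $\phi\circ\psi\circ\phi^{-1}$, matching the stated formula.

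The computations are all routine, and I do not expect a genuine obstacle here; the only point requiring care is the passage from the set-theoretic map $\rho$ to a genuinely well-defined \emph{linear} map on $K[\Aut_{\sf Hopf}(H)]\otimes U\big(\BiDer(H)\big)$. This is precisely what the combination of bilinearity (so that the map factors through the tensor product) and the universal property of $U$ (so that conjugation is upgraded from the Lie algebra $\BiDer(H)$ to all of $U\big(\BiDer(H)\big)$) delivers.
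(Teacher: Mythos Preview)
Your proposal is correct and follows essentially the same approach as the paper: fix $\phi\in\Aut_{\sf Hopf}(H)$, verify that conjugation by $\phi$ is a Lie algebra endomorphism of $\BiDer(H)$, extend it to an algebra endomorphism of $U\big(\BiDer(H)\big)$ via the universal property of $U$, and then extend linearly over the group algebra basis to obtain $\overline{\rho}$. The only cosmetic difference is the order in which the two extensions are performed.
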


\begin{proof}
First remark that the map $\rho:\Aut_{\sf Hopf}(H)\times \BiDer(H) \to \BiDer(H)$ is $K$-linear in the second argument. Therefore, we find by linear extension a bilinear map $K[\Aut_{\sf Hopf}(H)]\times \BiDer(H) \to \BiDer(H)$ which in turn induces a linear map $\rho':K[\Aut_{\sf Hopf}(H)]\ot \BiDer(H) \to \BiDer(H)$.

Fix any $\phi\in\Aut_{\sf Hopf}(H)$. Then the induced morphism $\rho_\phi:\BiDer(H)\to \BiDer(H)$ defined by $\rho_\phi(\psi)=\rho'(\phi\ot\psi)=\rho(\phi,\psi)$ is a Lie algebra morphism. Indeed:
\begin{eqnarray*}
\phi\cdot[\psi, \psi^{'}] &=& \phi \cdot (\psi \circ \psi^{'} - \psi^{'}\circ \psi)\\
&=& \phi\circ \psi\circ \psi^{'}\circ \phi^{-1} - \phi\circ \psi^{'}\circ \psi\circ\phi^{-1}\\
&=& [\phi\circ \psi\circ \phi^{-1}, \phi\circ \psi^{'}\circ \phi^{-1}] \\
&=& [\phi\cdot \psi, \phi\cdot \psi^{'}].
\end{eqnarray*}
Hence $\rho_\phi$ gives rise to an algebra map $\overline\rho_\phi:U\big(\BiDer(H)\big)\to U\big(\BiDer(H)\big)$. We can now define $\overline\rho:K[\Aut_{\sf Hopf}(H)]\otimes U\big(\BiDer(H)\big) \to U\big(\BiDer(H)\big)$ as $\overline\rho(\phi\ot x)=\overline\rho_\phi(x)$ for $\phi\in \Aut_{\sf Hopf}(H)$ and $x\in U\big(\BiDer(H)\big)$.
\end{proof}

\begin{proposition}\label{constructionSEC}
The map $\overline{\rho}$ from Lemma \ref{olrho} defines an action of Hopf algebras of $K[\Aut_{\sf Hopf}(H)]$ on $U\big(\BiDer(H)\big)$. 
Accordingly, this action corresponds to a split exact sequence via the semidirect product
$U\big(\BiDer(H)\big)\rtimes_{\overline{\rho}} K[\Aut_{\sf Hopf}(H)]$, that we will denote by $[H]$.
\end{proposition}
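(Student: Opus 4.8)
The plan is to verify that $\overline{\rho}$ satisfies the six axioms of Definition~\ref{axioms}, which is exactly what it means for $U\big(\BiDer(H)\big)$ to be a $K[\Aut_{\sf Hopf}(H)]$-module Hopf algebra; once this is done, the Proposition of Mol\'nar recalled earlier produces the semi-direct product $U\big(\BiDer(H)\big)\rtimes_{\overline{\rho}} K[\Aut_{\sf Hopf}(H)]$, and since both factors are cocommutative the smash product is cocommutative too, so the associated short exact sequence lives in $\mathbf{Hopf}_{K,coc}$ and is split by construction. The key preparatory observation is that $K[\Aut_{\sf Hopf}(H)]$ is generated as an algebra by the grouplike elements $\phi\in\Aut_{\sf Hopf}(H)$ and $U\big(\BiDer(H)\big)$ is generated as an algebra by the primitive elements $\psi\in\BiDer(H)$. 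Because each axiom is an identity between $K$-linear (or bilinear) maps, and the operations involved respect the algebra and coalgebra structures, it suffices to check each axiom on these algebra generators and then extend; this is what Lemma~\ref{olrho} has already set up, since $\overline{\rho}(\phi\ot x)=\overline{\rho}_\phi(x)$ with $\overline{\rho}_\phi$ an algebra endomorphism of $U\big(\BiDer(H)\big)$ induced by the Lie algebra automorphism $\psi\mapsto \phi\circ\psi\circ\phi^{-1}$.

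I would then dispatch the axioms in turn, working on generators. Axiom~1 ($1_B\cdot a=a$) is immediate since $1_B$ is the identity automorphism $id_H$ and $id_H\circ\psi\circ id_H=\psi$, so $\overline{\rho}_{id_H}=id$. Axiom~2, the module associativity $(\phi\phi')\cdot a=\phi\cdot(\phi'\cdot a)$, follows on generators from $(\phi\circ\phi')\circ\psi\circ(\phi\circ\phi')^{-1}=\phi\circ(\phi'\circ\psi\circ\phi'^{-1})\circ\phi^{-1}$, i.e. from $\overline{\rho}_{\phi\phi'}=\overline{\rho}_\phi\circ\overline{\rho}_{\phi'}$ as algebra maps, and extends multiplicatively. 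Axiom~3 ($\phi\cdot(aa')=(\phi_1\cdot a)(\phi_2\cdot a')$) reduces, since $\phi$ is grouplike so $\Delta(\phi)=\phi\ot\phi$, to the statement that $\overline{\rho}_\phi$ is an algebra morphism, which is precisely the content of Lemma~\ref{olrho}. Axiom~4 ($\phi\cdot 1_A=\epsilon(\phi)1_A$) holds because $\overline{\rho}_\phi$ is a unital algebra map and $\epsilon(\phi)=1$ for a grouplike $\phi$. The only axioms requiring genuine computation are~5 (the coalgebra compatibility $(\phi\cdot a)_1\ot(\phi\cdot a)_2=\phi_1\cdot a_1\ot\phi_2\cdot a_2$) and~6 ($\epsilon(\phi\cdot a)=\epsilon(\phi)\epsilon(a)$).

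For Axiom~5 I would again use that $\Delta(\phi)=\phi\ot\phi$, so the right-hand side becomes $\phi\cdot a_1\ot\phi\cdot a_2$, and reduce to checking that $\overline{\rho}_\phi$ is a \emph{coalgebra} morphism on $U\big(\BiDer(H)\big)$. On a primitive generator $\psi$ this is exactly the assertion, proved inside Lemma~\ref{rho}, that $\phi\cdot\psi=\phi\circ\psi\circ\phi^{-1}$ is again a Hopf derivation, hence in particular a coderivation of $H$; more precisely, one notes that $\phi\cdot\psi$ is primitive in $U\big(\BiDer(H)\big)$ whenever $\psi$ is, so $\overline{\rho}_\phi$ sends primitives to primitives and therefore commutes with the comultiplication on generators, and the general case follows because both $\Delta\circ\overline{\rho}_\phi$ and $(\overline{\rho}_\phi\ot\overline{\rho}_\phi)\circ\Delta$ are algebra maps agreeing on generators. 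Axiom~6 is handled the same way: $\epsilon\circ\overline{\rho}_\phi$ and $\epsilon$ are both algebra maps that agree on primitive generators (both vanish there, since primitives have counit zero), hence agree everywhere, and combining with $\epsilon(\phi)=1$ gives the claim. The main obstacle, such as it is, is purely bookkeeping: one must be careful to separate the grouplike behaviour of the $\phi$-variable (where $\Delta(\phi)=\phi\ot\phi$ collapses Sweedler sums) from the genuine coalgebra structure of the $U\big(\BiDer(H)\big)$-variable, and to justify the ``check on generators, then extend multiplicatively'' principle for each of the coalgebraic axioms rather than only the algebraic ones.
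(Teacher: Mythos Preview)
Your proposal is correct and follows essentially the same approach as the paper: verify the six axioms of Definition~\ref{axioms} by reducing to grouplike generators $\phi\in\Aut_{\sf Hopf}(H)$ and primitive generators $\psi\in\BiDer(H)$, using that $\overline\rho_\phi$ is an algebra morphism (Lemma~\ref{olrho}) to handle Axioms~3 and~4 and to justify the extension principle for the remaining axioms. If anything, you are slightly more explicit than the paper in spelling out \emph{why} the ``check on generators'' step is legitimate for the coalgebraic Axioms~5 and~6 (both sides are algebra maps agreeing on primitives), where the paper simply asserts this reduction as a consequence of Axioms~3 and~4.
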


\begin{proof}
We have to verify that $\overline{\rho}$ satisfies the axioms of an action of cocommutative Hopf algebras (Definition \ref{axioms}). Axiom 3 and Axiom 4 follow immediately from the construction of $\overline\rho$ in the proof of Lemma~\ref{olrho}. Indeed, by construction for any $\phi\in K[\Aut_{\sf Hopf}(H)]$, the map $\overline\rho_\phi$ is an algebra morphism, which expresses exactly Axiom 3 and Axiom 4. As a consequence, it is enough to verify the remaining axioms on base elements of the vector space $K[\Aut_{\sf Hopf}(H)]$ (i.e.\ elements $\phi\in \Aut_{\sf Hopf}(H)$) and generators of the algebra $U\big(\BiDer(H)\big)$ (i.e.\ elements of the form $\psi\in\BiDer(H)$).
\begin{itemize}
\item (Axiom 1) $id_{K[\Aut_{\sf Hopf}(H)]}\cdot \psi = \psi$;
\item (Axiom 2) $(\phi \circ \phi^{'})\cdot \psi = \phi\circ\phi^{'}\circ\psi\circ\phi^{'-1}\circ \phi^{-1} =  \phi \cdot(\phi^{'}\cdot \psi)$;
\item  (Axiom 5) Remark that any $\phi\in \Aut_{\sf Hopf}(H)$ is a grouplike element of $K[\Aut_{\sf Hopf}(H)]$ and any $\psi\in \BiDer(H)$ is a primitive element in $U\big(\BiDer(H)\big)$. Since $\phi\cdot \psi\in\BiDer(H)$ this is as well a primitive element in $U\big(\BiDer(H)\big)$. Hence we obtain
$$(\phi \cdot \psi)_{1} \otimes (\phi \cdot \psi)_{2} = \Delta(\phi \cdot \psi) = (\phi \cdot \psi)\otimes id + id\otimes (\phi \cdot \psi)$$ 
and
\begin{eqnarray*}
(\phi_{1}\cdot \psi_{1}) \otimes (\phi_{2}\cdot \psi_{2}) &=& (\phi \cdot \psi) \otimes (\phi \cdot id) + (\phi \cdot id) \otimes (\phi \cdot \psi) \\
&=& (\phi \cdot \psi)\otimes id + id\otimes (\phi \cdot \psi),
\end{eqnarray*}
from which the axiom follows.
\item  (Axiom 6) We have $\epsilon(\phi \cdot \psi) = 0 = \epsilon(\phi)\epsilon(\psi)$ since  $\phi \cdot \psi$ and $\psi$ are primitive elements.
\end{itemize}
\end{proof}

\subsection{Construction of an action of $[H]$ on $H$}

\begin{proposition}\label{actionSEC}
Let $H$ be a Hopf algebra and consider the Hopf algebra $[H]=U\big(\BiDer(H)\big)\rtimes_{\overline{\rho}} K[\Aut_{\sf Hopf}(H)]$ from Proposition~\ref{constructionSEC}.
The map $$\star : [H] \otimes H \longrightarrow H $$ 
given by
$$ (\psi \otimes \phi)\star h := \psi\big(\phi(h)\big),$$
for any $\psi\in\BiDer(H)$ and $\phi\in\Aut_{\sf Hopf}(H)$ turns $H$ into an $[H]$-module Hopf algebra.
\end{proposition}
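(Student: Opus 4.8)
The plan is to verify the six axioms of Definition~\ref{axioms} for the map $\star$, after first extending its definition from the algebra generators $\psi\in\BiDer(H)$ and $\phi\in\Aut_{\sf Hopf}(H)$ to all of $[H]$. The natural extension reads $\star$ as the combination of two module actions on $H$: the algebra $U\big(\BiDer(H)\big)$ acts by ``applying (products of) Hopf derivations'', using that the inclusion $\BiDer(H)\hookrightarrow \mathrm{End}(H)$ is a Lie algebra morphism for the commutator bracket and hence extends uniquely to an algebra morphism $U\big(\BiDer(H)\big)\to \mathrm{End}(H)$; and the group algebra $K[\Aut_{\sf Hopf}(H)]$ acts by evaluation, $\phi\cdot h=\phi(h)$. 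The stated formula $(\psi\otimes\phi)\star h=\psi\big(\phi(h)\big)$ is then read as ``first evaluate the automorphism part, then apply the derivation part''.

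First I would establish Axioms~1 and~2, i.e.\ that $\star$ is a genuine module action over the algebra $[H]$. Axiom~1 is immediate since $1_{[H]}=1\otimes \mathrm{id}_H$ acts as the identity. For Axiom~2 the crucial point is the compatibility between the two component actions and the twist $\overline{\rho}$ appearing in the semi-direct product: since $\phi$ is grouplike, the product in $[H]$ reads $(x\otimes\phi)(x'\otimes\phi')=x(\phi\cdot_{\overline{\rho}}x')\otimes\phi\phi'$, and one checks on generators that $(\phi\circ\psi'\circ\phi^{-1})\big(\phi(k)\big)=\phi\big(\psi'(k)\big)$. This identity --- which holds precisely because $\overline{\rho}$ is conjugation by automorphisms (Lemma~\ref{olrho}) --- is exactly what makes the composite map an algebra action; equivalently, it says that $[H]\to\mathrm{End}(H)$, $\psi\otimes\phi\mapsto\psi\circ\phi$, is an algebra homomorphism. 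I expect this compatibility to be the conceptual heart of the argument, even though the computation itself is short.

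Once Axioms~1 and~2 are in place, I would reduce the remaining axioms to the algebra generators of $[H]$. A direct check shows that the elements $b$ satisfying Axiom~3 (resp.\ Axiom~5) form a subalgebra: these are linear conditions on $b$, and if $b,b'$ satisfy the axiom then so does $bb'$, using $\Delta(bb')=b_1b_1'\otimes b_2b_2'$ together with Axiom~2; the analogous closure under products for Axioms~4 and~6 is even simpler. Since $[H]$ is generated as an algebra by the images of $U\big(\BiDer(H)\big)$ and $K[\Aut_{\sf Hopf}(H)]$, hence by the primitive elements $\psi\in\BiDer(H)$ and the grouplike elements $\phi\in\Aut_{\sf Hopf}(H)$, it suffices to verify Axioms~3--6 on these.

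Finally, on the generators the verification is a matter of unwinding definitions. For a primitive $\psi$ (with $\Delta\psi=\psi\otimes 1+1\otimes\psi$ in $[H]$): Axiom~3 is the Leibniz rule and Axiom~5 is the co-Leibniz rule, both part of the definition of a Hopf derivation, while Axioms~4 and~6 follow from $\psi(1_H)=0$ and $\epsilon\circ\psi=0$, which are exactly Lemma~\ref{derivationunit}. For a grouplike $\phi$, which is a Hopf algebra automorphism: Axioms~3 and~4 express that $\phi$ is an algebra morphism, and Axioms~5 and~6 that $\phi$ is a coalgebra morphism, remembering that $\epsilon(\psi)=0$ and $\epsilon(\phi)=1$. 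Collecting these cases completes the verification that $H$ is an $[H]$-module Hopf algebra.
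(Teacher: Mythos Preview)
Your proposal is correct and follows essentially the same route as the paper: verify the six module Hopf algebra axioms, using the Leibniz and co-Leibniz rules for $\psi$, the bialgebra morphism property for $\phi$, and Lemma~\ref{derivationunit} for the unit and counit conditions. The only difference is organizational: you make explicit the reduction step (the set of $b\in[H]$ satisfying Axioms~3--6 is a subalgebra, so it suffices to check on the primitive and grouplike generators separately), whereas the paper works directly with elements of the form $\psi\otimes\phi$ and leaves the extension to all of $[H]$ implicit; your version is slightly cleaner on this point but not substantively different.
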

\begin{proof}
Let us check that $\star$ satisfies the axioms of a module Hopf algebra (Definition \ref{axioms}).
\begin{itemize}
\item (Axiom 1) $\big(id_{U\big(\BiDer(H)\big)}\otimes id_{K[\Aut_{\sf Hopf}(H)]}\big)\star h = h$.
\item (Axiom 2) One has the equalities \begin{eqnarray*}
(\psi \otimes \phi)(\psi^{'} \otimes \phi^{'})\star h &=& \big(\psi\overline{\otimes}(\phi\cdot_{\overline{\rho}}\psi^{'}) \otimes (\phi \circ \phi^{'})\big) (h)\\
&=& (\psi\circ \phi\circ \psi^{'}\circ \phi^{-1} \circ \phi \circ \phi^{'}) (h)\\
&=& (\psi \otimes \phi)\star \big((\psi^{'} \otimes \phi^{'}) \star h\big).
\end{eqnarray*} 
Here we denoted by $\overline\ot$ the multiplication in the universal enveloping algebra $U\big(\BiDer(H)\big)$.
\item (Axiom 3) First remark that for any $\phi\in\Aut_{\sf Hopf}(H)$ and $\psi\in\BiDer(H)$ we have 
\begin{eqnarray*}
\Delta(\psi \otimes \phi) &=& (id \otimes \sigma \otimes id)\big(\Delta(\psi) \otimes \Delta(\phi)\big)\\
&=& (id\otimes \sigma\ot id)((\psi \ot 1+1\ot \psi)\ot(\phi\ot \phi))\\
&=&\psi \otimes \phi \otimes id \otimes \phi + id \otimes \phi \otimes \psi \otimes \phi
\end{eqnarray*}
where we used that $\phi$ is grouplike and $\psi$ is primitive. Then we can now verify the axiom.
\begin{eqnarray*}
(\psi \otimes \phi)\star hh' &=& \psi\big(\phi(hh')\big)\\
&=& \psi\big(\phi(h)\phi(h')\big) \\
&=& \psi (\phi(h))\phi(h') + \phi(h)\psi(\phi(h')),\\
&=& \big((\psi \otimes \phi)\star h)\big((id\otimes \phi)\star h'\big) + \big((id\otimes \phi)\star h)\big((\psi\otimes \phi)\star h'\big)\\
&=&\big( (\psi\ot \phi)_1\star h\big)\big( (\psi\ot \phi)_2\star h'\big)
\end{eqnarray*} 
where we used that $\phi$ is an algebra morphism in the second equality and that $\psi$ is a derivation in the third equality. 
\item (Axiom 4) The right-hand side of the equality is given by $$(\psi \otimes \phi)\star 1_{H} = \psi\big(\phi(1_{H})\big) = \psi(1_{H}) = 0,$$
where we applied Lemma~\ref{derivationunit}.
 The left-hand side of the equality is given by $$\epsilon(\psi \otimes \phi)1_{H} = \epsilon(\psi)\epsilon(\phi)1_{H} = 0$$ where used $\epsilon(\psi)=0$ since $\psi$ is a primitive element of $U\big(\BiDer(H)\big)$.
\item (Axiom 5) The left-hand side of the identity to be checked is given by 
\begin{eqnarray*}
\big((\psi \otimes \phi)\star h\big)_{1} \otimes \big((\psi \otimes \phi)\star h\big)_{2} &=& \Delta\big((\psi \otimes \phi)\star h\big) 
= \Delta \circ \psi \circ \phi (h)\\ 
&=& (\psi \otimes id + id \otimes \psi)\circ \Delta \circ \phi(h) \\
&=& (\psi \otimes id + id \otimes \psi)\circ (\phi \otimes \phi) \circ \Delta(h) \\
&=& \big((\psi \circ \phi)\otimes \phi + \phi \otimes (\psi \circ \phi)\big) \circ \Delta(h). 
\end{eqnarray*}
On the other hand, since 
$$\Delta(\psi \otimes \phi) = \psi \otimes \phi \otimes id \otimes \phi + id \otimes \phi \otimes \psi \otimes \phi,$$ 
the right-hand side of the identity is given by 
\begin{eqnarray*}
\big((\psi \otimes \phi)_{1}\star h_{1}\big) \otimes \big((\psi \otimes \phi)_{2}\star h_{2}\big)&=&
\big((\psi \otimes \phi)\star h_{1}\big) \otimes \big((id\otimes \phi)\star h_{2}\big)\\
 &&+ \big((id\otimes \phi)\star h_{1}\big)\otimes \big((\psi \otimes \phi)\star h_{2}\big)\\
&=& \psi\big(\phi(h_{1})\big) \otimes \phi(h_{2}) + \phi(h_{1})\otimes \psi\big(\phi(h_{2})\big)\\
&=& \big((\psi \circ \phi)\otimes \phi + \phi \otimes (\psi \circ \phi)\big) \circ \Delta(h).
\end{eqnarray*}
\item (Axiom 6) The right-hand side of the identity is given by  $$\epsilon(\psi\otimes \phi)\epsilon(h) = \epsilon(\psi)\epsilon(\phi)\epsilon(h) = 0$$ 
where we used again that $\epsilon(\psi)=0$ since $\psi$ is a primitive element. The left-hand side of the identity is given by 
$$\epsilon\big((\psi \otimes \phi)\star h\big)=\epsilon\circ \psi\circ\phi (h)=0$$ 
where we used Lemma~\ref{derivationunit}.
\end{itemize}
\end{proof}

\subsection{Universal property of the split extension classifier}\label{construction3}

The aim of this section is to prove that for a given cocommutative Hopf algebra $H$, the Hopf algebra $[H]$ constructed in Proposition~\ref{constructionSEC} together with its action on $H$ defined in Proposition~\ref{actionSEC} is exactly the split extension classifier of $H$ in $\Hopf_{K,coc}$, i.e.\ it satisfies the universal property recalled in the introduction. 

\begin{theorem}\label{universal}
For any algebraically closed field $K$ of characteristic \hspace*{0.1mm} $0$, the split extension classifier of any cocommutative Hopf $K$-algebra $H$ is given by 
$$[H]=U\big(\BiDer(H)\big)\rtimes_{\overline{\rho}} K\big[\Aut_{\sf Hopf}(H)\big]$$ where $$\overline{\rho}: K[\Aut_{\sf Hopf}(H)]\otimes U\big(\BiDer(H)\big) \longrightarrow U\big(\BiDer(H)\big)$$ is defined by $$\overline{\rho} (\phi \otimes \psi) := \phi \cdot \psi = \phi \circ \psi \circ \phi^{-1}$$ on the generators of $K[\Aut_{\sf Hopf}(H)]\otimes U\big(\BiDer(H)\big)$.
\end{theorem}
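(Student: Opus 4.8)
The plan is to recast the asserted universal property as a representability statement and to prove a natural bijection
\[
\mathrm{Hom}_{\Hopf_{K,coc}}(B,[H])\;\cong\;\{\,\text{actions of } B \text{ on } H\,\},
\]
sending $\mathrm{id}_{[H]}$ to the canonical action $\star$ of Proposition~\ref{actionSEC}. Indeed, by Lemma~\ref{Molnar} an action of $B$ on $H$ is the same datum as a split extension of $B$ by $H$, and by Proposition~\ref{recombine} a morphism of the corresponding split extensions over $\mathrm{id}_H$ is exactly a Hopf morphism $\chi\colon B\to[H]$ satisfying $\chi(y)\star x=y\cdot x$, with $\ol\chi=\mathrm{id}_H\ot\chi$. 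Hence such a bijection is precisely the required universal property of $[H]$.

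One direction is immediate: given $\chi\colon B\to[H]$, the formula $y\cdot x:=\chi(y)\star x$ defines an action of $B$ on $H$, since $\star$ satisfies the axioms of Definition~\ref{axioms} and $\chi$ is a morphism of Hopf algebras. For the converse, I would fix an action $\cdot$ of $B$ on $H$ and decompose it along the Cartier--Gabriel--Kostant--Milnor--Moore isomorphism $B\cong U(L_B)\rtimes K[G_B]$ of Theorem~\ref{MM}. Reading off the module-Hopf-algebra axioms, for each grouplike $g\in G_B$ the map $\phi_g:=g\cdot(-)$ is a bialgebra endomorphism of $H$ (Axioms 3--6, using $\Delta(g)=g\ot g$), invertible with inverse $\phi_{g^{-1}}$ (Axioms 1--2), so $g\mapsto\phi_g$ is a group homomorphism $G_B\to\Aut_{\sf Hopf}(H)$; and for each primitive $p\in L_B$ the map $\psi_p:=p\cdot(-)$ satisfies the Leibniz rule (Axiom 3 with $\Delta(p)=p\ot 1+1\ot p$) and the co-Leibniz rule (Axiom 5), hence lies in $\BiDer(H)$, and $p\mapsto\psi_p$ is a morphism of Lie algebras $L_B\to\BiDer(H)$. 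By Propositions~\ref{Kadjoint} and \ref{functorQ} these extend to Hopf morphisms $\chi_G\colon K[G_B]\to K[\Aut_{\sf Hopf}(H)]$ and $\chi_L\colon U(L_B)\to U(\BiDer(H))$.

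To glue $\chi_G$ and $\chi_L$ into a single morphism $\chi\colon B\to[H]$ I would appeal to Proposition~\ref{recombine}. The compatibility it requires is that $\chi_L$ and $\chi_G$ intertwine the conjugation action of $K[G_B]$ on $U(L_B)$ coming from the decomposition of $B$ (Examples~\ref{examplesaction}(4), $g\cdot x=gxg^{-1}$) with the action $\ol\rho$ of $K[\Aut_{\sf Hopf}(H)]$ on $U(\BiDer(H))$. On the generating primitives this amounts to the identity $\psi_{g\cdot x}=\phi_g\circ\psi_x\circ\phi_g^{-1}=\phi_g\cdot_{\ol\rho}\psi_x$, which is nothing but Axiom 2, $(g\cdot x)\cdot h=g\cdot\big(x\cdot(g^{-1}\cdot h)\big)$; since both sides are multiplicative in $U(L_B)$ and the relevant maps act by algebra morphisms, it suffices to check it on $G_B$ and $L_B$. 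Proposition~\ref{recombine} then yields the Hopf morphism $\chi=\chi_L\ot\chi_G\colon B\to[H]$.

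It remains to verify that the two assignments are mutually inverse and natural in $B$. Starting from an action $\cdot$ and passing to $\chi$, the pulled-back action $\chi(y)\star x$ agrees with $y\cdot x$ on the generators, since $\chi(g)\star x=(1\ot\phi_g)\star x=\phi_g(x)=g\cdot x$ and $\chi(p)\star x=(\psi_p\ot\mathrm{id}_H)\star x=\psi_p(x)=p\cdot x$, and hence everywhere, because both are algebra morphisms $B\to\mathrm{End}(H)$ and $G_B\cup L_B$ generates $B$. Conversely, starting from a morphism $\chi$, I would use that the grouplike and primitive elements of the semidirect product are exactly $\Gg([H])=\Aut_{\sf Hopf}(H)$ and $\Pp([H])=\BiDer(H)$: a grouplike $\chi(g)\in\Aut_{\sf Hopf}(H)$ acts as $\chi(g)\star(-)=\phi_g$, so $\chi(g)=\phi_g$, and a primitive $\chi(p)\in\BiDer(H)$ acts as $\chi(p)\star(-)=\psi_p$, so $\chi(p)=\psi_p$; as $\chi$ is determined by its restriction to $G_B$ and $L_B$, the morphism reconstructed from its associated action is again $\chi$. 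The main obstacle I expect is this converse construction: decomposing a $B$-module Hopf algebra structure along $B\cong U(L_B)\rtimes K[G_B]$ into a group action by automorphisms and a Lie action by Hopf derivations, and checking the single conjugation-compatibility of Proposition~\ref{recombine} needed to reassemble them into a Hopf morphism $B\to[H]$; the identification of $\Gg([H])$ and $\Pp([H])$ and the remaining verifications are routine bookkeeping with the module axioms and the adjunctions of Propositions~\ref{Kadjoint} and \ref{functorQ}.
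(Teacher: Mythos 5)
Your proposal is correct and follows essentially the same route as the paper: both decompose the acting Hopf algebra via Cartier--Gabriel--Kostant--Milnor--Moore, split the action into a group homomorphism $G\to\Aut_{\sf Hopf}(H)$ on grouplikes and a Lie algebra morphism $L\to\BiDer(H)$ on primitives, reassemble these into a single Hopf morphism into $[H]$ using the compatibility criterion of Proposition~\ref{recombine}, and obtain uniqueness from the fact that a morphism out of $U(L)\rtimes K[G]$ is determined by its effect on grouplike and primitive generators. Your reformulation as a natural bijection between $\mathrm{Hom}(B,[H])$ and actions of $B$ on $H$ is only a cosmetic repackaging of the same verification.
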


\begin{proof}
Recall from Proposition~\ref{actionSEC} that $[H]$ acts on $H$ by $\star$, and therefore we have the following split exact sequence in $\Hopf_{K,coc}$
\begin{equation}\label{H[H]}
\xymatrix{
0 \ar[r]^-{} &  H \ar[r]^-{i_{1}} & H\rtimes_{\star} [H] \ar@<-0.5ex>[r]_-{p_{2}}   & [H] \ar[r]^-{} \ar@<-0.5ex>[l]_-{i_{2}} & 0 
}
\end{equation}
Consider now any split exact sequence in $\mathbf{Hopf}_{K,coc}$ of the form
\begin{equation}\label{first}
\xymatrixcolsep{2pc}\xymatrix{
 0 \ar[r] & H  \ar[r]  & B \ar@<-0.5ex>[r] & A \ar@<-0.5ex>[l] \ar[r] & 0. }\end{equation} 
 By applying the semi-direct product decomposition of $A$ recalled in Theorem \ref{MM}, and by using Lemma \ref{Molnar}, the split exact sequence \eqref{first} is isomorphic to the following split exact sequence in $\mathbf{Hopf}_{K,coc}$. \begin{equation*}
\xymatrixcolsep{2pc}\xymatrix{
 0 \ar[r] & H  \ar[r]^-{k}  & H\rtimes_{A} \big(U(L_A)\rtimes K[G_A]\big) \ar@<-0.5ex>[r]_-{f} & U(L_A)\rtimes K[G_A] \ar@<-0.5ex>[l]_-{s} \ar[r] & 0. }\end{equation*} 
We have to show that there is a unique morphism of split exact sequences between between \eqref{H[H]} and \eqref{first}. 
To this end, we will first construct a morphism $$\chi\colon U(L_A)\rtimes K[G_A] \longrightarrow U\big(\BiDer(H)\big)\rtimes_{\overline{\rho}} K\big[\Aut_{\sf Hopf}(H)\big]$$ compatible with the morphism $id_{H}$ (i.e. such that the following diagram commutes, see Proposition \ref{recombine}), and then prove the uniqueness of such a morphism $\chi$.
\begin{equation}
\xymatrixcolsep{0.8pc}\xymatrix{
0 \ar[r]^-{} &  H \ar[r]^-{k} \ar@{=}[d] & H\rtimes_{A} \big(U(L_A)\rtimes K[G_A]\big) \ar@<-0.5ex>[r]_-{f} \ar@{.>}[d]_-{id_{H}\otimes \chi}  & U(L_A)\rtimes K[G_A] \ar@{.>}[d]^-{\chi} \ar[r]^-{} \ar@<-0.5ex>[l]_-{s} & 0 \\
0 \ar[r]^-{} &  H \ar[r]^-{i_{1}} & H\rtimes_{\star} \Big(U\big(\BiDer(H\big))\rtimes_{\overline{\rho}} K\big[\Aut_{\sf Hopf}(H)\big]\Big) \ar@<-0.5ex>[r]_-{p_{2}}   & U\big(\BiDer(H)\big)\rtimes_{\overline{\rho}} K\big[\Aut_{\sf Hopf}(H)\big] \ar[r]^-{} \ar@<-0.5ex>[l]_-{i_{2}} & 0 }
\end{equation}

\underline{Step 1: Existence of $\chi$}. \underline{a) Construction of a group morphism $\chi_{G}: G_A \longrightarrow \Aut_{\sf Hopf}(H)$}.\\
If $\rho :A\ot H\to H$ is the action of $A$ on $H$ induced by the split exact sequence \eqref{first}, defined by $\rho(a\ot h)=a\cdot h$, one has a group homomorphism defined by $\chi_{G}: G_A \longrightarrow \Aut_{\sf Hopf}(H): \chi_{G}(g)(h):= g\cdot h$, for any $g\in G_A$ and $h\in H$.

Let us first check that $\forall g\in G_A$, the map $\chi_{G}(g)$ is an endomorphism of Hopf algebras of $H$: 
\begin{itemize}
\item $\chi_{G}(g)$ is an algebra morphism since $\forall h,h'\in H$ we have  
\begin{eqnarray*}
\chi_{G}(g)(hh') &=& g\cdot (hh') = (g_1\cdot h)(g_2\cdot h')\\
 &=& (g\cdot h)(g\cdot h') = \big(\chi_{G}(g)(h)\big)\big(\chi_{G}(g)(h')\big)\\
\chi_G(g)(1_H) &=& g\cdot 1_H = \epsilon(g)1_H = 1_H
\end{eqnarray*}
by axioms 3 and 4 of an action of cocommutative Hopf algebras and by the fact that $g$ is a group-like element of $H$. 
\item $\chi_{G}(g)$ is a coalgebra morphism since for all $h\in H$ we have 
\begin{eqnarray*}
\Delta\big(\chi_{G}(g)(h)\big) &=& \Delta\big(g\cdot h)
= (g\cdot h)_{1}\otimes (g\cdot h)_{2}\\
&=& (g_{1}\cdot h_{1})\otimes (g_{2}\cdot h_{2})
= (g\cdot h_{1})\otimes (g\cdot h_{2})\\
&=& \big(\chi_{G}(g)\otimes \chi_{G}(g)\big)\Delta(h)\\
\epsilon\big(\chi_{G}(g)(h)\big) &=& \epsilon(g\cdot h) = \epsilon(g)\epsilon(h)=\epsilon(h)
\end{eqnarray*} 
by the axioms 5 and 6 of an action of cocommutative Hopf algebras and by the fact that $g$ is a group-like element of $H$. 
\end{itemize}
So far, we have proven that there is a well-defined map $\chi_G:G_A\to {\sf End}_{\sf Hopf}(H)$. Let us show now that $\chi_G$ is moreover a monoid morphism.
\begin{itemize}
\item 
indeed: for all $g,g'\in G$ and $h\in H$, we have
\begin{eqnarray*}
\big(\chi_{G}(g')\circ \chi_{G}(g)\big)(h) &=& g' \cdot (g \cdot h)
{=} (g'g)\cdot h
= \chi_G(g'g)(h)
\end{eqnarray*}
where the second equality follows follows axiom 2. Observe that the map $\chi_G$ preserves the neutral element by axiom 1.
\end{itemize}
In particular, it follows that $\chi_G(g^{-1})=\chi_G(g)^{-1}$ and therefore, $\chi_G(g)$ is an automorphism of Hopf algebras for all $g\in G$, i.e. $\chi_G:G_A\to {\sf Aut}_{\sf Hopf}(H)$ is a group morphism as stated.

\underline{b) Construction of a Lie algebra morphism $\chi_{L}: L_A \longrightarrow \BiDer(H)$}.\\
Consider again the action  of $A$ on $H$, $\rho :A\ot H\to H$, with $\rho(a\ot h)=a\cdot h$ and define $\chi_L: L_A\to {\sf End}_L(H), \chi_L(x)(h)=x\cdot h$ for all $x\in L_A$ and $h\in L$.
Let us prove that $\chi_{L}(x)$ is a Hopf derivation in $H$ for all $x\in L_A$.
\begin{itemize}
\item $\chi_{L}(x)$ is a derivation since we have for all $h,h'\in H$
\begin{eqnarray*}
\chi_{L}(x)(hh') &=& (x_1\cdot h)(x_2\cdot h')\\
&=& (x \cdot h)(1\cdot h') + (1\cdot h)(x\cdot h') \\
&=& (x\cdot h)h'+h(x\cdot h') \\
&=& \chi_{L}(x)(h)h' + h\chi_{L}(x)(h')
\end{eqnarray*}
by axiom 1 and 3 of an action of cocommutative Hopf algebras and by the fact that $x$ is a primitive element. 
\item From duality, we find that $\chi_{L}(x)$ is a coderivation. Explicitly, this follows since we have for all $h\in H$,
\begin{eqnarray*}
\big(\Delta \circ \chi_{L}(x)\big)(h)&=& \Delta(x\cdot h) \\
&=& (x\cdot h)_{1} \otimes (x\cdot h)_{2}  \\
&=& \big(x_{1}\cdot h_{1}\big)\otimes \big(x_{2}\cdot h_{2}\big)\\
&=& \big(x\cdot  h_{1}\big) \otimes \big(1\cdot  h_{2}\big) + \big(1\cdot h_{1}\big)\otimes \big(x\cdot  h_{2}\big)\\
&=& \big(x\cdot  h_{1}\big)\otimes h_{2} + h_{1}\otimes \big(x\cdot h_{2}\big)\\
&=& \chi_{L}(x)(h_{1})\otimes h_{2} + h_{1}\otimes \chi_{L}(x)(h_{2})\\
&=& \big(\chi_{L}(x)\otimes id + id \otimes \chi_{L}(x)\big)\circ \Delta(h)
\end{eqnarray*}
In the above equalities, we have used the axioms 1 and 5 of an action of cocommutative Hopf algebras, and the fact that $x$ is a primitive element.
\end{itemize}
Hence $\chi_L:L_A\to \BiDer(H)$ is well-defined. 
Furthermore, $\forall x,y\in L_A$, $\forall h\in H$, one has: 
\begin{eqnarray*}
\chi_{L}[x,y](h) &=& \chi_{L}(xy-yx)(h)\\
&=& (xy-yx)\cdot  h\\
&=& (xy)\cdot  h - (yx)\cdot  h \\
&=& x\cdot  (y\cdot  h)-y\cdot (x\cdot  h) \\
&=& \big(\chi_{L}(x)\circ \chi_{L}(y)- \chi_{L}(y)\circ \chi_{L}(x)\big)(h) \\
&=& \big[\chi_{L}(x),\chi_{L}(y)\big](h)
\end{eqnarray*}
where we used the axiom 2 of action of cocommutative Hopf algebras. Therefore, $\chi_L:L_A\to \BiDer(H)$ is a morphism of Lie algebras.

\underline{c) Compatibility between group and Lie algebra part}.\\
Let us check that the morphisms $$U(\chi_{L})\colon U(L_A) \longrightarrow U\big(\BiDer(H)\big)$$ and $$K[\chi_{G}]\colon K[G_A] \longrightarrow K\big[\Aut_{\sf Hopf}(H)\big]$$ satisfy the compatibility condition (Proposition \ref{recombine}). We denote the action of $K[G_A]$ on $U(L_A)$ by $\cdot_A$. Recall from Example \ref{examplesaction} (4) that $g\cdot_A x=gxg^{-1}$ for all $\forall g\in G_{A}, \forall x \in L_{A}$. 
Then for all $h\in H$ we find
\begin{eqnarray*}
\chi_{L}(g\cdot_{A} x)(h) &=& (g\cdot_A x)\cdot h = (gxg^{-1})\cdot h \\
&=& g\cdot \big(x\cdot (g^{-1}\cdot h)\big) \\
&=& \big(\chi_{G}(g)\circ \chi_{L}(x)\circ  \chi_{G}(g^{-1})\big)(h)\\
&=& \big(\chi_G(g)\cdot_{\bar\rho} \chi_L(x)\big)(h)
\end{eqnarray*}
where we applied again axiom 2 of a Hopf algebra action. As a consequence we obtain the morphisms $U(\chi_{L})$ and $K[\chi_{G}]$ can be recombined to define a Hopf algebra morphism $\chi := U(\chi_{L}) \otimes K[\chi_{G}]:A\to [H]$.

\underline{d) Compatibility between $id_H$ and $\chi$.}\\
We will now check that also the morphisms $id_{H}$ and $\chi$ satisfy the compatibility condition (Proposition \ref{recombine}). Indeed, for a generator $x\otimes g \in U(L_A)\rtimes_{A} K[G_A]$, with $x\in L_A$ and $g\in G_A$, associated to the element $xg\in A$ by the isomorphism of Cartier-Gabriel-Konstant-Milnor-Moore (Theorem \ref{MM}), and for all $h\in H$, one has:
\begin{eqnarray*}
\Big(\chi(xg)\Big) \cdot_{\star} id_{H}(h) &=& \Big(U(\chi_{L})(x) \otimes K[\chi_{G}](g)\Big)\cdot_{\star} id_{H}(h)\\
&=& \Big(\chi_{L}(x)\circ \chi_{G}(g)\Big)(h)
= x\cdot (g\cdot  h)\\
&=&  (xg)\cdot  h
=  id_{H}\big((xg)\cdot  h\big)
\end{eqnarray*}
by the axiom 2 of an action of cocommutative Hopf algebras.

\underline{Step 2: The uniqueness}.\\
It remains to prove the uniqueness of the morphism $\chi$. For this, consider two other morphisms $\xi$ and $\overline{\xi}$ making the following diagram commute
\begin{equation}
\xymatrixcolsep{0.8pc}\xymatrix{
0 \ar[r]^-{} &  H \ar[r]^-{k} \ar@{=}[d] & H\rtimes_{A} \big(U(L_A)\rtimes K[G_A]\big) \ar@<-0.5ex>[r]_-{f} \ar@{.>}[d]_-{\overline{\xi}}  & U(L_A)\rtimes K[G_A] \ar@{.>}[d]^-{\xi} \ar[r]^-{} \ar@<-0.5ex>[l]_-{s} & 0 \\
0 \ar[r]^-{} &  H \ar[r]^-{i_{1}} & H\rtimes_{\star} \Big(U\big(\BiDer(H\big))\rtimes_{\overline{\rho}} K\big[\Aut_{\sf Hopf}(H)\big]\Big) \ar@<-0.5ex>[r]_-{p_{2}}   & U\big(\BiDer(H)\big)\rtimes_{\overline{\rho}} K\big[\Aut_{\sf Hopf}(H)\big] \ar[r]^-{} \ar@<-0.5ex>[l]_-{i_{2}} & 0 }
\end{equation}

By Proposition \ref{recombine} we know that $\overline{\xi}= {id}_H \otimes \xi$ and ${id}_H$ and $\xi$ are compatible, hence
we obtain for all $a\in A$ and $h\in H$ the following equality in $H$ 
$$\xi(a)\cdot_{\star} h = a\cdot  h = \chi(a)\cdot_{\star}h.$$
The morphism $\xi$ induces a group homomorphism ${\xi}_G \colon G_A \rightarrow \Aut_{\sf Hopf}(H)$ and a Lie algebra homomorphism ${\xi}_L \colon L_A \rightarrow \BiDer(H)$.
We see that for all $x \in L_A$ and $g \in G_A$ $${\xi}_L(x)(h)= {\chi}_L(x)(h),$$ and 
$${\xi}_G(g)(h)= {\chi}_G(g)(h).$$
Therefore, since $A=U(L_A)\rtimes K[G_A]$, we find that $\xi=\chi$.
\end{proof}

\section{Centers and centralizers in $\mathbf{Hopf}_{K,coc}$}
 In the last section we compare the categorical notions of center and centralizer in the semi-abelian category $\mathbf{Hopf}_{K,coc}$ with the ones recently introduced in the context of general Hopf algebras. \\
 
\subsection{Centers and centralizers in semi-abelian action representable categories}\label{general}
Recall that the \textit{center of a group} $G$, denoted by $Z(G)$, is defined by 
\begin{center}
 $Z(G) = \{x\in G, xg=gx \hspace*{0.15cm} \forall g\in G\}.$
 \end{center} More generally, the \textit{centralizer $C_{G}(H)$ of a subgroup} $H$ of a group $G$ is the set of elements of $G$ which commute with every element of $H$, i.e. $$C_{G}(H) = \{x\in G, xh=hx \hspace*{0.15cm} \forall h\in H \}.$$
The center of a group $G$ is the kernel of the conjugation map, i.e. the kernel of the morphism $\phi: G \longrightarrow \Aut(G)$, where $\phi(x)$ is defined by $\phi(x)(y)=xyx^{-1}$, $\forall x,y\in G$. In the same way, the centralizer of a subgroup $H$ of a group $G$ is the kernel of the morphism $\phi: H \longrightarrow \Aut(G)$, where $\phi(h)$ is defined by $\phi(h)(g)=hgh^{-1}$, $\forall h\in H$, $\forall g\in G$.

In the category $\mathbf{Lie}_{K}$ of $K$-Lie algebras the notions of center and of centralizer are defined similarly. The \textit{center of a Lie algebra} $L$, denoted by $Z(L)$, is the ideal
\begin{center}
$Z(L) = \{x\in L, [x,a] = 0 \hspace*{0.15cm} \forall a\in L\}$
\end{center}
whereas the \textit{centralizer $C_{L}(I)$ of a Lie subalgebra $I$ of a Lie algebra $L$} is the ideal of the elements that commute with every element of $I$, i.e. $$C_{L}(I) = \{x\in L, [x,i]=0 \hspace*{0.15cm} \forall i\in I \}.$$
As in the case of groups, also in $\mathbf{Lie}_{K}$ these subalgebras occur as kernels of suitable morphisms.
Indeed, the center of a Lie algebra $L$ is the kernel of the adjoint representation, i.e. of the Lie algebra homomorphism $ad: L \longrightarrow End(L)$ defined by $$ ad(x):= [x,-], \quad \forall x \in L.$$ The centralizer of a Lie subalgebra $I$ of $L$ is the kernel of the homomorphism $$ad: I \longrightarrow End(L)$$ defined by $$ad(l):= [l,-], \quad \forall l \in L.$$

In the context of action representable semi-abelian categories A. Cigoli and S. Mantovani gave a description of the center and of the centralizer of a normal subobject \cite{Cigoli} that is similar to the one just recalled in the categories of groups and of Lie algebras. If $h: H \rightarrow A$ is a normal subobject of $A$, the quotient map will be denoted by $p: A \rightarrow A/H$, whereas $\big(Eq(p), p_1, p_2\big)$ will denote the equivalence relation on $A$ occurring as the kernel pair of $p$, so that $p_1$ and $p_2$ are the projections in the following pullback:
\[\xymatrixcolsep{2pc}\xymatrix{
 Eq(p) \ar[r]^-{p_2} \ar[d]_-{p_1} \ar@{}[rd]|<<<{\pullback}  & A \ar[d]^-{p} \\
A \ar[r]_-{p}  & A/H  }\] 
We write $\Delta: A \longrightarrow Eq(p)$ for the unique morphism such that $p_{1}\circ \Delta = 1_{A} = p_{2}\circ \Delta$, and $C_{A}(H)$ for the \textit{centralizer of the normal subobject} $H$ of $A$, that is the largest subobject of $A$ that centralizes $H$, in the sense that $[H,A]=0$ (see \cite{MR2553540}), where $[H,A]$ denotes the categorical commutator in the sense of Huq \cite{Huq}. The following theorem by Cigoli and Mantovani characterizes centralizers and centers in semi-abelian action representable categories. In the present paper we will use this equivalent formulation as the definition of centralizer and center, respectively.

\begin{theorem}\cite{Cigoli} \label{centralizer}
 In any semi-abelian action representable category the {\bf centralizer} $C_{A}(H)$ of a normal subobject $H$ of $A$ is the kernel of the unique morphism $\chi$ induced by the universal property of the split extension classifier $[H]$ of $H$:
\begin{equation}\label{centralizerdiagram}
\xymatrixcolsep{3pc}\xymatrix{
0 \ar[r]^-{} &  H \ar[r]^-{HKer(p_1)} \ar@{=}[d] & Eq(p) \ar@<-0.5ex>[r]_-{p_1} \ar[d]_-{\overline{\chi}} \ar@{}[rd]|<<<{\pullback}  & A \ar[d]^-{\chi} \ar[r]^-{} \ar@<-0.5ex>[l]_-{\Delta} & 0 \\
0 \ar[r]^-{} &  H \ar[r]^-{i_1} & H\rtimes_{\star} [H] \ar@<-0.5ex>[r]_-{p_2}   & [H] \ar[r]^-{} \ar@<-0.5ex>[l]_-{i_2} & 0 }
\end{equation}
\end{theorem}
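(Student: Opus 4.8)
The plan is to identify $\ker(\chi)$ with the largest subobject of $A$ that centralizes $H$, by translating Huq-centrality into a condition on the classifying morphism $\chi$. Since $H$ is normal in $A$ with quotient $p\colon A\to A/H$, the kernel pair $(Eq(p),p_1,p_2)$ together with the diagonal $\Delta$ yields the split extension $0\to H\to Eq(p)\to A\to 0$ with section $\Delta$ and kernel $HKer(p_1)\cong H$; its associated internal action of $A$ on $H$ is precisely the canonical (conjugation) action determined by the normality of $H$. By action representability this action is classified by the unique morphism $\chi\colon A\to[H]$ of \eqref{centralizerdiagram}, so $\chi$ is the internal counterpart of the conjugation homomorphism $A\to\Aut(H)$ used in the cases of $\mathbf{Grp}$ and $\mathbf{Lie}_K$.

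The heart of the argument is a correspondence valid for every subobject $k\colon K\to A$: the Huq commutator $[K,H]$ vanishes if and only if $\chi\circ k\colon K\to[H]$ is the zero morphism. To prove it I would restrict the conjugation action of $A$ on $H$ along $k$, obtaining an action of $K$ on $H$ whose classifying morphism is exactly $\chi\circ k$. In an action representable category the trivial action --- the one whose semidirect product is the direct product $H\times K$ --- is classified by the zero morphism, so $\chi\circ k=0$ is equivalent to triviality of this restricted action. Finally, triviality is equivalent to $[K,H]=0$: the multiplication map $H\rtimes K\to A$ determined by $h$ and $k$ is a morphism precisely because the ambient action is conjugation, and it factors through $H\times K$ --- producing the Huq cooperator witnessing $[K,H]=0$ --- exactly when the restricted action is trivial.

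Granting this correspondence the conclusion is immediate. A subobject $K$ centralizes $H$ if and only if $k$ factors through $\ker(\chi)$; hence the largest centralizing subobject is $\ker(\chi)$ itself, and $\ker(\chi)$ does centralize $H$ by applying the correspondence to $K=\ker(\chi)$. Since kernels are normal monomorphisms, this also recovers the expected fact that $C_A(H)=\ker(\chi)$ is a normal subobject of $A$, in analogy with the group- and Lie-theoretic centralizers.

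The main obstacle is to make the commutator--action correspondence of the second paragraph precise in the general semi-abelian framework, i.e.\ to prove that $[K,H]=0$ is equivalent to triviality of the restricted conjugation action without recourse to elementwise conjugation computations. This is exactly where one must lean on the abstract interplay between internal actions, semidirect products and commutators, and in particular on the coincidence of Huq and Smith centrality in action representable categories \cite{MR2553540}; one must also verify carefully that $\ker(\chi)$ is the maximal centralizing subobject and not merely a centralizing one.
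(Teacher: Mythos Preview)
The paper does not prove this theorem: it is quoted from \cite{Cigoli} and, as stated in the paragraph immediately preceding it, is explicitly adopted as the working \emph{definition} of centralizer and center for the remainder of the article. There is therefore no proof in the paper to compare your proposal against.

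That said, your outline is the correct strategy and is essentially how the result is obtained in \cite{Cigoli}. The identification of the kernel-pair split extension with the conjugation action is exactly what the paper spells out just after the theorem in the case of groups. Your key equivalence---for a subobject $k\colon K\to A$, one has $[K,H]=0$ if and only if $\chi\circ k=0$---is indeed the heart of the matter, and the mechanism you describe (restriction of the classifying morphism, trivial action classified by zero, trivial action $\Leftrightarrow$ existence of a Huq cooperator) is sound. You are right that the implication from trivial restricted action to $[K,H]=0$ needs the Smith--Huq coincidence of \cite{MR2553540}, since the natural argument produces a connector for the restricted equivalence relation rather than a cooperator directly. Once the equivalence is in place, maximality of $\ker(\chi)$ among centralizing subobjects is immediate from the universal property of the kernel, as you say.
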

The {\bf center} $Z(A)$ of an object $A$ is the centralizer of $A$ in $A$:
$$Z(A) =  C_{A}(A).$$
Of course, in the case of the category of groups (resp. of the category of Lie algebras), the upper split extension sequence in Diagram \ref{centralizerdiagram} gives rise to the conjugation action of $A$ on $X$ (resp. to the adjoint representation).  For instance, in the case of groups the action on an element $a\in A $ on an element $x \in H$  is defined by 
$$a \cdot x = \Delta (a) HKer(p_1)(x) \Delta(a)^{-1}= (a,a)(1,x)(a^{-1}, a^{-1})= (1, axa^{-1}),$$
which is exactly the conjugation action, by identifying the element $x$ with its image $HKer(p_1)(x)=(1,x)$ via the kernel map of $p_1$. In this way the classical notion of centralizer is recovered.

\subsection{Centers and centralizers in $\mathbf{Hopf}_{K,coc}$}
From now on $K$ will always denote an algebraically closed field of zero characteristic.
Let $H$ be a normal Hopf subalgebra of a cocommutative Hopf algebra $A$. Then one can construct 
the augmentation ideal $H^+=\{h\in H~|~\epsilon_H(h)=0\}$ of $H$, and
the quotient Hopf algebra $A/J$, where $J=AH^+A$ is the ideal in $A$ generated by $H^+$, which is moreover a Hopf ideal. We write $p:A\to A/J$ for the canonical projection map. 
The pullback $Eq(p)=A\times_H A$ of $p$ along itself can be computed explicitly in the category $\Hopf_{K,coc}$ as follows:
$$Eq(p)=\{a\ot a'\in A\ot A~|~p(a_1)\ot a_2\ot a'=p(a'_1)\ot a\ot a'_2\in A/J\ot A\ot A\}.$$
The projections $p_1,p_2:A\times_HA\to A$ are given respectively by $p_1(a\ot a')=a\epsilon(a')$ and $p_2(a\ot a')=\epsilon(a)a'$ for all $a\ot a'\in A\times_H A$. As explained above, we can then consider the unique morphism $\Delta:A\to A\times_HA$ satisfying $p_1\circ\Delta=id_A=p_2\circ\Delta$, which is exactly given by the corestriction of the comultiplication $A \to A \times A$ of $A$.\\
We know that a kernel of $p_1$ is the morphism 
$HKer(p_1) \colon H \to Eq(p)$ defined by $HKer(p_1)(x)= 1\ot x$, for any $x\in H$. {Since $p_1$ is a split epimorphism in the semi-abelian category $\mathbf{Hopf}_{K,coc}$, it is necessarily the cokernel of its kernel, so that the following sequence is exact}:
\begin{equation}
\xymatrixcolsep{3pc}\xymatrix{
0 \ar[r]^-{} &  H \ar[r]^-{HKer(p_1)}  & Eq(p) \ar@<-0.5ex>[r]_-{p_1} & A \ar[r]^-{} \ar@<-0.5ex>[l]_-{\Delta} & 0.}
\end{equation}
{From Lemma \ref{Molnar} it follows that this split exact sequence is isomorphic to the split exact sequence} in the following commutative diagram
 \begin{equation}
  \xymatrixcolsep{3pc}\xymatrix{
0 \ar[r]^-{} &  H \ar[r]^-{HKer(p_2)} &  H \rtimes A \ar@<-0.5ex>[r]_-{p_2}  & A  \ar[r]^-{} \ar@<-0.5ex>[l]_-{i_2} & 0,}
 \end{equation}
 where the action of $A$ on $H$ is exactly the one recalled in Example \ref{examplesaction}.(3).
 It follows that there is no restriction in replacing diagram \eqref{centralizerdiagram} with the following one
 \begin{equation}\label{centralizerHopf}
\xymatrixcolsep{3pc}\xymatrix{
0 \ar[r]^-{} &  H \ar[r]^-{HKer(p_2)} \ar@{=}[d] &  H \rtimes A \ar@<-0.5ex>[r]_-{p_2} \ar[d]_-{\overline{\chi}} \ar@{}[rd]|<<<{\pullback}  & A \ar[d]^-{\chi} \ar[r]^-{} \ar@<-0.5ex>[l]_-{i_2} & 0 \\
0 \ar[r]^-{} &  H \ar[r]^-{i_1} & H \rtimes_{\star} [H] \ar@<-0.5ex>[r]_-{p_2}   & [H] \ar[r]^-{} \ar@<-0.5ex>[l]_-{i_2} & 0,}
\end{equation}
and the \textit{centralizer} $C_{A}(H)$ in $\mathbf{Hopf}_{K,coc}$ of a normal Hopf subalgebra $H$ of $A$ will be given by the kernel of $\chi$ in the following morphism of split extensions (by Theorem \ref{universal}):

\[\xymatrixcolsep{2pc}\xymatrix{
0 \ar[r]^-{} &  H \ar[r]^-{HKer(p_2)} \ar@{=}[d] & H \rtimes A  \ar@<-0.5ex>[r]_-{p_2} \ar[d]_-{\overline{\chi}}   & A \cong U(L_A)\rtimes_{A} K[G_A] \ar[d]^-{\chi=U(\chi_{L}) \otimes K[\chi_{G}]} \ar[r]^-{} \ar@<-0.5ex>[l]_-{i_2} & 0 \\
0 \ar[r]^-{} &  H \ar[r]^-{i_1} & H\rtimes_{\star} [H] \ar@<-0.5ex>[r]_-{p_2}   & [H]=U\big(\BiDer(H)\big)\rtimes_{\overline{\rho}} K\big[\Aut_{\sf Hopf}(H)\big] \ar[r]^-{} \ar@<-0.5ex>[l]_-{i_2} & 0 }\]
In order to describe the centralizer $C_{A}(H)$ we have to compute the kernel of $\chi$ in the category $\mathbf{Hopf}_{K,coc}$, denoted by $HKer(\chi)$. We are going to show that 
$$HKer(\chi)=HKer\big(U(\chi_{L}) \otimes K[\chi_{G}]\big) \cong HKer\big(U(\chi_{L})\big) \rtimes_{A} HKer\big(K[\chi_{G}]\big).$$ This is proved by computing the kernels of $U(\chi_{L})$, $\chi$ and $K[\chi_{G}]$ in the following commutative diagram
\[\xymatrixcolsep{1pc}\xymatrix{
0 \ar[r]^-{} & U(L_A) \ar[r]^-{i_A} \ar[d]_-{U(\chi_{L})} & U(L_A)\rtimes_{A} K[G_A] \ar@<-0.5ex>[r]_-{p_A} \ar[d]_-{\chi = U(\chi_{L}) \otimes K[\chi_{G}]} & K[G_A] \ar[d]^-{K[\chi_{G}]} \ar[r]^-{} \ar@<-0.5ex>[l]_-{s_A} & 0 \\
0 \ar[r]^-{} &  U\big(\BiDer(H)\big) \ar[r]^-{i_{[H]}} & U\big(\BiDer(H)\big)\rtimes_{\overline{\rho}} K\big[\Aut_{\sf Hopf}(H)\big] \ar@<-0.5ex>[r]_-{p_{[H]}}   & K\big[\Aut_{\sf Hopf}(H)\big] \ar[r]^-{} \ar@<-0.5ex>[l]_-{s_{[H]}} & 0 }\] obtaining the following commutative diagram
\[\xymatrixcolsep{1pc}\xymatrixrowsep{1.5pc}\xymatrix{
 & 0 \ar[d]^-{} & 0 \ar[d]^-{} & 0 \ar[d]^-{} & \\
  & HKer\big(U(\chi_{L})\big) \ar[d]_-{HKer\big(U(\chi_{L})\big)} \ar[r]^-{i} & HKer(\chi) \ar[d]_-{HKer(\chi)} \ar@<-0.5ex>[r]_-{p} & HKer\big(K[\chi_{G}]\big) \ar[d]^-{HKer\big(K[\chi_{G}]\big)} \ar@<-0.5ex>[l]_-{s} &  \\
0 \ar[r]^-{} & U(L_A) \ar[r]^-{i_A} \ar[d]_-{U(\chi_{L})} & U(L_A)\rtimes_{A} K[G_A] \ar@<-0.5ex>[r]_-{p_A} \ar[d]_-{\chi = U(\chi_{L}) \otimes K[\chi_{G}]} & K[G_A] \ar[d]^-{K[\chi_{G}]} \ar[r]^-{} \ar@<-0.5ex>[l]_-{s_A} & 0 \\
0 \ar[r]^-{} &  U\big(\BiDer(H)\big) \ar[r]^-{i_{[H]}} & U\big(\BiDer(H)\big)\rtimes_{\overline{\rho}} K\big[\Aut_{\sf Hopf}(H)\big] \ar@<-0.5ex>[r]_-{p_{[H]}}   & K\big[\Aut_{\sf Hopf}(H)\big] \ar[r]^-{} \ar@<-0.5ex>[l]_-{s_{[H]}} & 0 }\]  where the upper row is easily seen to be a split short exact sequence.
Let us now show that $$HKer\big(K[\chi_{G}]\big) = K\big[ker_{Grp}(\chi_{G})\big],$$ and $$HKer\big(U(\chi_{L})\big) = U\big(ker_{Lie}(\chi_{L})\big).$$
\begin{proposition}\label{noyaugrp}
If $f: G \longrightarrow G'$ is a group homomorphism and $ker(f)$ its kernel in $\mathbf{Grp}$, then the Hopf kernel $HKer(\overline{f})$ of $\overline{f}=K[f]: K[G] \longrightarrow K[G']$ in $\mathbf{Hopf}_{K,coc}$ is given by $K\big[ker(f)\big]$.
\end{proposition}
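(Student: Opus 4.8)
The plan is to compute the Hopf kernel directly from its explicit description and then match it against $K[\ker(f)]$. Recall that
$$HKer(\overline f)=\{h\in K[G]\mid \overline f(h_1)\otimes h_2=1_{K[G']}\otimes h\}.$$
Since $K$ is a field, the grouplike elements of $K[G]$ are exactly the elements of $G$, so $G$ is a $K$-basis of $K[G]$ consisting of grouplike elements, i.e.\ $\Delta(g)=g\otimes g$. Hence any $h\in K[G]$ can be written uniquely as a finite sum $h=\sum_{g\in G}\lambda_g\, g$ with $\lambda_g\in K$, and its comultiplication is $h_1\otimes h_2=\sum_{g}\lambda_g\, g\otimes g$.

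First I would substitute this expression into the defining condition. Writing $e'$ for the unit of $G'$ (so that $1_{K[G']}=e'$) and using $\overline f(g)=f(g)$, the left-hand side becomes $\sum_g \lambda_g\, f(g)\otimes g$, while the right-hand side is $\sum_g\lambda_g\, e'\otimes g$. The next step is to compare coefficients in the basis $\{g'\otimes g\mid g'\in G',\, g\in G\}$ of $K[G']\otimes K[G]$. Because the second tensor factor $g$ is distinct in each summand, the term $\lambda_g\, f(g)\otimes g$ occupies the basis vector $f(g)\otimes g$ on the left, to be matched against $\lambda_g\, e'\otimes g$ on the right. Linear independence then forces $\lambda_g=0$ whenever $f(g)\neq e'$, that is, whenever $g\notin\ker(f)$. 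This establishes $HKer(\overline f)\subseteq K[\ker(f)]$.

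For the reverse inclusion I would simply observe that if $g\in\ker(f)$ then $f(g)=e'$, so $f(g)\otimes g=e'\otimes g$, and hence every element of $K[\ker(f)]$ satisfies the defining condition. Combining the two inclusions gives $HKer(\overline f)=K[\ker(f)]$, with kernel morphism the canonical inclusion $K[\ker(f)]\hookrightarrow K[G]$ obtained by applying $K[-]$ to $\ker(f)\hookrightarrow G$.

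Since the argument is an elementary coefficient comparison, I do not expect a genuine obstacle; the only point requiring care is the linear-independence bookkeeping, namely that distinct elements $g$ never collide because they occupy the second tensor slot even when their images $f(g)$ coincide. As a conceptual alternative avoiding computation altogether, one may invoke Proposition~\ref{Kadjoint}: over an algebraically closed field of characteristic zero the functor $K[-]$ is a right adjoint, hence preserves all limits, and in particular sends the kernel $\ker(f)$ (the pullback of $f$ along the trivial subgroup) to the Hopf kernel of $K[f]$, yielding $HKer(\overline f)\cong K[\ker(f)]$ at once.
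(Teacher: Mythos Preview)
Your proof is correct. The paper's own proof is precisely the one-line categorical argument you mention at the end as a ``conceptual alternative'': since $K[-]$ is a right adjoint by Proposition~\ref{Kadjoint}, it preserves kernels, and that is all the paper says.

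Your main argument, by contrast, is an elementary coefficient comparison using the basis of grouplikes. This buys you a bit more generality: the direct computation works over any field $K$, whereas the paper's appeal to Proposition~\ref{Kadjoint} uses that $K[-]$ is a \emph{right} adjoint, which in the paper is only established when $K$ is algebraically closed of characteristic zero (the left-adjoint direction holds in general, but left adjoints need not preserve kernels). The trade-off is length and transparency: the paper's route is a single sentence and highlights how the adjunction packages such facts, while yours is self-contained and independent of the structure theorem.
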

\begin{proof}
This is an immediate consequence of Proposition \ref{Kadjoint}, since right
adjoint functors between pointed finitely complete categories preserve kernels.
\end{proof}

\begin{proposition}\label{noyaulie}
Let $f: L \longrightarrow L'$ be a Lie $K$-algebra morphism and $ker(f)$ its kernel in the category of Lie $K$-algebras. Then the Hopf kernel $HKer(\overline{f})$ of $\overline{f}=U(f): U(L) \longrightarrow U(L')$ in $\mathbf{Hopf}_{K,coc}$ is given by $U\big(ker(f)\big)$.
\end{proposition}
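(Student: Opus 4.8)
The plan is to mirror the proof of Proposition~\ref{noyaugrp} and reduce the statement to the fact that a right adjoint functor between pointed finitely complete categories preserves kernels. The only genuinely delicate point is to select the right adjunction. Although $U$ is a \emph{left} adjoint to the primitive-elements functor $\Pp$, this property guarantees only preservation of colimits and is useless here. Instead I would invoke Proposition~\ref{functorQ}, which asserts that over an algebraically closed field of characteristic zero the functor $U$ also admits a left adjoint $\Qq$; by the adjunction $\Qq \dashv U$, the functor $U$ is therefore itself a \emph{right} adjoint, and right adjoints preserve all finite limits.

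Concretely, I would argue as follows. In a pointed finitely complete category the kernel of a morphism is a limit, namely the equalizer of that morphism with the corresponding zero morphism (equivalently, the pullback along the zero object). Both $\mathbf{Lie}_K$ and $\mathbf{Hopf}_{K,coc}$ are pointed and finitely complete, the latter being semi-abelian. Moreover $U$ sends the zero object $0$ of $\mathbf{Lie}_K$ (the trivial Lie algebra) to $U(0)=K$, which is precisely the zero object of $\mathbf{Hopf}_{K,coc}$, so $U$ carries zero morphisms to zero morphisms. Being a right adjoint, $U$ then takes the equalizer defining $ker(f)$ in $\mathbf{Lie}_K$ to the equalizer defining the kernel of $\overline{f}=U(f)$ in $\mathbf{Hopf}_{K,coc}$; that is, $HKer(\overline{f})=U\big(ker(f)\big)$. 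Since $U$ additionally preserves monomorphisms, $U$ of the inclusion $ker(f)\hookrightarrow L$ is injective, so the object obtained is genuinely the sub-Hopf algebra of $U(L)$ generated by $ker(f)$, as required.

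The categorical step is essentially free once the adjunction of Proposition~\ref{functorQ} is available, and I do not expect any obstacle there. The one point demanding care is the non-formal input itself, the existence of the left adjoint $\Qq$, and this is exactly where the standing hypotheses on $K$ (algebraically closed, characteristic zero) enter the argument. Should one prefer to avoid the left adjoint altogether, the alternative would be a direct computation from the Hopf-kernel formula together with the Poincar\'e--Birkhoff--Witt theorem applied to a vector-space splitting $L = ker(f)\oplus C$: the inclusion $U\big(ker(f)\big)\subseteq HKer(\overline{f})$ is immediate since $U(f)$ vanishes on $ker(f)$, whereas the reverse inclusion is the harder direction and is precisely what the adjoint argument bypasses.
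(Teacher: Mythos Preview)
Your proposal is correct and is essentially identical to the paper's own proof: the paper likewise invokes Proposition~\ref{functorQ} to exhibit $U$ as a right adjoint (via the left adjoint $\Qq$) and then appeals to the general fact that right adjoints between pointed finitely complete categories preserve kernels. The paper even parenthetically mentions the same alternative of a direct verification that you sketch at the end.
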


\begin{proof}
Again this is an immediate consequence of the fact that right
adjoint functors between pointed finitely complete categories preserve kernels, together with Proposition \ref{functorQ} {(one can also verify this property directly, by checking that  $U\big(ker(f)\big) = HKer\big(U(f)\big)$).}
\end{proof}

One can then compute $HKer(\chi)$ by computing $ker_{Lie}(\chi_{L})$ and $ker_{Grp}(\chi_{G})$ and obtain the following:

\begin{theorem}\label{Centralizer}
Given a normal subalgebra $H$ of $A\in \mathbf{Hopf}_{K,coc}$, the centralizer $C_{A}(H)$ in the category of cocommutative Hopf algebras is given by 
$$C_{A}(H) = U\big(ker_{Lie}(\chi_{L})) \rtimes_{A} K\big[ker_{Grp}(\chi_{G})\big]$$
where \begin{center}
$ker_{Lie}(\chi_{L})=\{x\in L_A$ such that $[x,h]=0,$ for all $h\in H\}$
\end{center} and \begin{center}
$ker_{Grp}(\chi_{G})=\{g\in G_A$ such that $gh=hg,$ for all $h\in H\}.$
\end{center}
\end{theorem}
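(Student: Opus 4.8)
The plan is to assemble the structural reductions carried out just before the statement with two explicit kernel computations. By Theorem~\ref{centralizer} the centralizer $C_A(H)$ is the Hopf kernel $HKer(\chi)$ of the morphism $\chi = U(\chi_L)\otimes K[\chi_G]\colon A\to [H]$. The $3\times 3$ diagram preceding the statement presents $HKer(\chi)$ as the middle object of a split short exact sequence with kernel $HKer\big(U(\chi_L)\big)$ and cokernel $HKer\big(K[\chi_G]\big)$, so Lemma~\ref{Molnar} yields
\[
HKer(\chi)\cong HKer\big(U(\chi_L)\big)\rtimes_A HKer\big(K[\chi_G]\big),
\]
the action being the restriction of the conjugation action of $K[G_A]$ on $U(L_A)$. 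Propositions~\ref{noyaulie} and~\ref{noyaugrp} then rewrite the two kernels as $U\big(ker_{Lie}(\chi_L)\big)$ and $K\big[ker_{Grp}(\chi_G)\big]$, so the whole of $C_A(H)$ reduces to identifying these two subobjects concretely.

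For the group part, $g\in G_A$ lies in $ker_{Grp}(\chi_G)$ exactly when $\chi_G(g)=id_H$. Using the conjugation action of Example~\ref{examplesaction}.(3) together with $\Delta(g)=g\otimes g$ and $S(g)=g^{-1}$, one gets $\chi_G(g)(h)=g\cdot h = g_1 h S(g_2)=ghg^{-1}$, so the condition becomes $ghg^{-1}=h$, i.e.\ $gh=hg$ for all $h\in H$. For the Lie part, $x\in L_A$ lies in $ker_{Lie}(\chi_L)$ exactly when $\chi_L(x)=0$; since $x$ is primitive, $\Delta(x)=1\otimes x+x\otimes 1$ forces $S(x)=-x$, and the same action gives $\chi_L(x)(h)=x\cdot h=x_1 h S(x_2)=xh+hS(x)=xh-hx=-[x,h]$ for all $h\in H$. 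Hence $\chi_L(x)=0$ is equivalent to $[x,h]=0$ for all $h\in H$. Combining the three identifications gives the asserted formula for $C_A(H)$.

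The substantive content lies entirely in these two computations, the structural decomposition being furnished by the preceding diagram together with the two cited kernel propositions. The only place where care is genuinely needed is the correct handling of the antipode on grouplike and primitive elements, namely $S(g)=g^{-1}$ and $S(x)=-x$; with these in hand the abstract vanishing conditions $\chi_G(g)=id_H$ and $\chi_L(x)=0$ translate directly into the stated commutation relations, the sign appearing in $\chi_L(x)(h)=-[x,h]$ being irrelevant to the kernel. One should also note, for the semi-direct product on the right to be well-defined, that $ker_{Grp}(\chi_G)$ stabilises $ker_{Lie}(\chi_L)$ under conjugation, but this is automatic from the naturality built into the $3\times 3$ diagram.
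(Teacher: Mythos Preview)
Your proposal is correct and follows precisely the route the paper sets up: the structural decomposition of $HKer(\chi)$ via the $3\times 3$ diagram together with Propositions~\ref{noyaugrp} and~\ref{noyaulie}, followed by the explicit identification of $ker_{Grp}(\chi_G)$ and $ker_{Lie}(\chi_L)$ using the conjugation action of Example~\ref{examplesaction}(3). The paper leaves these last two computations implicit (presenting the theorem as an immediate consequence of the preceding discussion), so your explicit verification that $\chi_G(g)(h)=ghg^{-1}$ and $\chi_L(x)(h)=xh-hx$ simply spells out what the paper takes for granted.
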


Let us finally mention a result that follows immediately from a more general one in the general context of an action representable semi-abelian category $\textbf{C}$ \cite{Charactersitic}. 
Recall that a subobject $F$ of $G$ in $\textbf{C}$ is called a \textit{characteristic subobject} of $G$ if, whenever $G$ is a normal subobject in $H$ for some object $H$ in {$\textbf C$}, then $F$ is also normal as a subobject in $H$ (see Remark 2.6 in \cite{Charactersitic}).
\begin{proposition}\label{centracat}
For every normal Hopf subalgebra $K$ of $H$, the centralizer $Z_{H}(K)$ of $K$ in $H$ is a normal Hopf subalgebra in $H$.  Moreover, the center $Z(H)$ is a characteristic Hopf subalgebra of $H$.
\end{proposition}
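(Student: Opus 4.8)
The plan is to derive both assertions from general results valid in any action representable semi-abelian category, as developed in \cite{Charactersitic}, after observing that $\mathbf{Hopf}_{K,coc}$ meets these hypotheses. Indeed, this category is semi-abelian by \cite{GKV}, and it is action representable because every object admits the split extension classifier exhibited in Theorem \ref{universal}. Hence the abstract theory applies without modification, and the notions of centralizer and of center used above (through Theorem \ref{centralizer}) are precisely the ones to which that theory refers.

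For the first assertion, I would argue directly from the characterization of the centralizer as a kernel. By Theorem \ref{centralizer}, the centralizer $Z_H(K)=C_H(K)$ is the Hopf kernel $HKer(\chi)$ of the morphism $\chi\colon H \to [K]$ classifying the conjugation action of $H$ on its normal subobject $K$. In any pointed protomodular category, and in particular in the semi-abelian category $\mathbf{Hopf}_{K,coc}$, every kernel is a normal monomorphism; thus $Z_H(K)$ is automatically a normal Hopf subalgebra of $H$.

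For the second assertion, I would invoke the general theorem of \cite{Charactersitic} stating that the center of any object is a characteristic subobject. Unwinding the definition recalled just before the statement, this means: whenever $H$ is a normal Hopf subalgebra of a larger cocommutative Hopf algebra $G$, the center $Z(H)$ is again normal in $G$. The conceptual reason is that normality of $H$ in $G$ equips $H$ with a conjugation action of $G$ by Hopf algebra automorphisms (as in Example \ref{examplesaction}.(3)), and the center $Z(H)$, being intrinsically defined, is stable under every such automorphism; consequently the inner automorphisms of $H$ induced by $G$ leave $Z(H)$ invariant, which is exactly the normality of $Z(H)$ in $G$.

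The main obstacle is not computational but conceptual: one must confirm that the categorical notions of centralizer and of center employed here coincide with those treated in \cite{Charactersitic}, so that its general statements transfer verbatim. Given the explicit description of the split extension classifier in Theorem \ref{universal} and the kernel characterization of the centralizer in Theorem \ref{centralizer}, this identification is immediate, and both assertions then follow at once, as the lead-in to the proposition already anticipates.
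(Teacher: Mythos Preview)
Your proposal is correct and follows essentially the same approach as the paper: the paper gives no separate proof, stating only that the result ``follows immediately from a more general one in the general context of an action representable semi-abelian category'' in \cite{Charactersitic}, and you do precisely this, supplementing the citation with the direct observation that the centralizer, being a kernel, is automatically normal.
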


\subsection{Arbitrary Hopf algebra centers}

N.\ Andruskiewitsch \cite{And:center} 
introduced the following definition of the center for an arbitrary (not necessarily cocommutative) Hopf algebra 
(see also \cite{Center}). 

\begin{definition}
Let $A$ be any Hopf algebra. The \textit{Hopf algebra center} $HZ(A)$ of $A$ is the largest Hopf subalgebra of $A$ contained in the algebraic center $Z_{alg}(A)$ of $A$:
$$Z_{alg}(A) = \{x\in A \hspace*{0.2cm} s.t. \hspace*{0.2cm} xy=yx \hspace*{0.2cm} \forall y\in A\} .$$
\end{definition}
A.\ Chirvasitu and P.\ Kasprzak observe in \cite[Theorem 2.2]{Center} that in the case of a Hopf algebra $A$ with bijective antipode (which holds for example in the cocommutative context), the Hopf algebra center of $A$ can be described  as:
$$HZ(A)= \{x\in A~|~ \Delta(x)\in A\otimes Z_{alg}(A)\}$$

Our final result shows that the Hopf algebra center in the sense of Andruskiewitsch coincides with the categorical notion of a center for cocommutative Hopf algebras over an algebraically closed field of characteristic zero.

\begin{proposition}
Given a cocommutative Hopf algebra $A$, then 
$$Z(A)=HZ(A).$$
Here, $Z(A)=HKer(\chi)$ denotes the categorical center of $A$, where $\chi\colon A \longrightarrow [A]$ is the unique morphism associated to the action of $A$ on itself by conjugation. Writing $\chi := U(\chi_{L}) \otimes K[\chi_{G}]$, we have $$Z(A) = U\big(ker_{Lie}(\chi_{L})\big) \rtimes_{A} K\big[ker_{Grp}(\chi_{G})\big]$$ where 
\begin{center}
$ker_{Lie}(\chi_{L})=\{x\in L_A$ such that $[x,a]=0,$ for all $a\in A\}$
\end{center} 
and 
\begin{center}
$ker_{Grp}(\chi_{G})=\{g\in G_A$ such that $ga=ag,$ for all $a\in A\}.$
\end{center}
\end{proposition}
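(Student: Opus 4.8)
The plan is to prove the equality $Z(A)=HZ(A)$ by showing that both sides describe the same sub-Hopf algebra of $A$, using the Cartier-Gabriel-Kostant-Milnor-Moore decomposition $A\cong U(L_A)\rtimes K[G_A]$ to reduce everything to grouplike and primitive elements. The categorical side is already computed for me: by Theorem \ref{Centralizer} applied to the normal subalgebra $H=A$, the center is $Z(A)=U\big(ker_{Lie}(\chi_L)\big)\rtimes_A K\big[ker_{Grp}(\chi_G)\big]$, where $ker_{Lie}(\chi_L)=\{x\in L_A\mid [x,a]=0\ \forall a\in A\}$ and $ker_{Grp}(\chi_G)=\{g\in G_A\mid ga=ag\ \forall a\in A\}$. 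So the genuine content is to identify this semi-direct product with the Andruskiewitsch center $HZ(A)$.

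First I would unwind the conjugation action. The morphism $\chi\colon A\to[A]$ is the one classifying the action of $A$ on itself given by $a\cdot x=a_1xS(a_2)$ (Example \ref{examplesaction}.(3) with $H=A$); tracing through Step 1 of the proof of Theorem \ref{universal} shows that $\chi_G(g)(a)=gaS(g)=gag^{-1}$ for grouplike $g$ and $\chi_L(x)(a)=x\cdot a=xa-ax=[x,a]$ for primitive $x$. Hence $g\in ker_{Grp}(\chi_G)$ exactly means $g$ is a grouplike element commuting with all of $A$, and $x\in ker_{Lie}(\chi_L)$ exactly means $x$ is a primitive element commuting with all of $A$; in both cases membership in the kernel is equivalent to landing in the algebraic center $Z_{alg}(A)$.

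Next I would use the description $HZ(A)=\{x\in A\mid \Delta(x)\in A\otimes Z_{alg}(A)\}$ from \cite[Theorem 2.2]{Center}, valid since cocommutative Hopf algebras have bijective antipode. The key observation is that $HZ(A)$ is itself a cocommutative Hopf algebra, so by Theorem \ref{MM} it decomposes as $U\big(L_{HZ(A)}\big)\rtimes K\big[G_{HZ(A)}\big]$, and I would argue that its grouplike elements are precisely $G_{HZ(A)}=ker_{Grp}(\chi_G)$ and its primitive elements are precisely $L_{HZ(A)}=ker_{Lie}(\chi_L)$. For a grouplike $g\in A$, the condition $\Delta(g)=g\otimes g\in A\otimes Z_{alg}(A)$ forces $g\in Z_{alg}(A)$, and conversely a central grouplike clearly satisfies it; similarly for primitive $x$, $\Delta(x)=1\otimes x+x\otimes 1\in A\otimes Z_{alg}(A)$ forces $x\in Z_{alg}(A)$ (applying $\epsilon\otimes id$), and a central primitive satisfies it. Since both $Z(A)$ and $HZ(A)$ are sub-Hopf algebras of $A$ with identical groups of grouplikes and identical Lie algebras of primitives, the decomposition theorem gives $Z(A)=HZ(A)$.

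The main obstacle I anticipate is justifying that $HZ(A)$, defined as the \emph{largest} Hopf subalgebra inside $Z_{alg}(A)$, is recovered exactly by the two pieces $U\big(ker_{Lie}(\chi_L)\big)$ and $K\big[ker_{Grp}(\chi_G)\big]$ — i.e.\ that no ``mixed'' central elements are lost. This is precisely where I would lean on the rigidity of the CGKMM decomposition: any sub-Hopf algebra of $A$ contained in $Z_{alg}(A)$ is again cocommutative, hence is determined by its own grouplikes and primitives, each of which must be central, so it is contained in the semi-direct product $U\big(ker_{Lie}(\chi_L)\big)\rtimes_A K\big[ker_{Grp}(\chi_G)\big]$; and conversely this semi-direct product is a sub-Hopf algebra sitting inside $Z_{alg}(A)$ because it is algebra-generated by central elements. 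Establishing this maximality matching carefully, rather than merely comparing generators, is the delicate point.
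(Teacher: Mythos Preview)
Your argument is correct, but it proceeds quite differently from the paper's own proof. You reduce the equality $Z(A)=HZ(A)$ to a comparison of grouplike and primitive parts via the Cartier--Gabriel--Kostant--Milnor--Moore decomposition: you identify $G_{HZ(A)}$ and $L_{HZ(A)}$ with the central grouplikes and central primitives of $A$, observe these coincide with $ker_{Grp}(\chi_G)$ and $ker_{Lie}(\chi_L)$, and conclude by the fact that a cocommutative sub-Hopf algebra over such $K$ is determined by its grouplikes and primitives. This is a perfectly valid structural argument, and your final paragraph correctly isolates and resolves the only delicate point (maximality).

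The paper instead argues by a direct double inclusion that avoids invoking the decomposition theorem again at this stage. For $Z(A)\subseteq HZ(A)$ it uses the Hopf-kernel condition $\chi(x_1)\otimes x_2 = 1_{[A]}\otimes x$ together with the explicit action $\star$ of $[A]$ on $A$ to show that any $x\in HKer(\chi)$ satisfies $xy=yx$ for all $y\in A$; since $Z(A)$ is already a Hopf subalgebra contained in $Z_{alg}(A)$, maximality of $HZ(A)$ gives the inclusion. For $HZ(A)\subseteq Z(A)$ it observes that on elements $a\in HZ(A)$ the conjugation action is trivial, $a_1 b S(a_2)=b\epsilon(a)$, so $\chi$ restricted to $HZ(A)$ classifies the trivial action and hence equals the zero morphism by uniqueness; the universal property of the kernel then forces $HZ(A)\subseteq HKer(\chi)=Z(A)$. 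The paper's route is more categorical and transparent about \emph{why} the two centers agree (triviality of conjugation on central elements), whereas your route makes the identification concretely visible on generators but leans once more on the structure theorem.
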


\begin{proof}
The explicit description of the categorical center $Z(A)$ follows directly from Theorem \ref{Centralizer}.

Let us first prove that $Z(A) \subseteq HZ(A)$. An element $x\in A$ is in $Z(A)=HKer(\chi)$ if and only if $\chi(x_1)\ot x_2= id_{[A]}\ot x \in [A]\ot A$. Let us prove that $x\in Z_{alg}(A)$, i.e. $xy=yx$, $\forall y\in A$. To this end, denote the action of $[A]$ on $A$ by $$\overline{\rho}: [A]\ot A\to A,\ \overline{\rho}(u\ot x)=u.x$$ and define the following map for any $y\in A$ 
$$r_y: [A]\ot A\to A,\ r_{y}(u\ot x)= (u.y)x.$$ 
We have $r_{y}(id_{[A]}\ot x) = id_{A}(y)x=yx$, and
\begin{eqnarray*}
r_y\big(\chi(x_1)\ot x_2\big) &=& \chi(x_{1})(y)x_{2}\\
 &=& x_{1}yS(x_{2})x_{3} \\
&=&x_{1}y \epsilon(x_{2}) \\
&=& xy.
\end{eqnarray*}
We obtain that $Z(A)\subset Z_{alg}(A)$. Since $Z(A)$ is a Hopf subalgebra of $A$, and $HZ(A)$ is the largest Hopf subalgebra in $Z_{alg}(A)$, we obtain that $Z(A) \subseteq HZ(A)$.

On the other hand, taking any $a\in HZ(A)\subset Z_{alg}(A)$, we find for any other $b\in A$ that the conjugate action of $a$ on $b$ is trivial:
$$a\cdot b=a_{1}bS(a_{2})=ba_{1}S(a_{2})=b\epsilon(a)$$
Consequently, we find that $\chi$ composed with the inclusion map $HZ(A)\subset A$ is the zero morphism in $\Hopf_{K,coc}$. By the universal property of the kernel $HKer(\chi)=Z(A)$ we find then that 
$HZ(A)\subset Z(A)$ and both have to be equal.
\end{proof}


\begin{thebibliography}{10}
\providecommand{\url}[1]{{#1}}
\providecommand{\urlprefix}{URL }
\expandafter\ifx\csname urlstyle\endcsname\relax
  \providecommand{\doi}[1]{DOI~\discretionary{}{}{}#1}\else
  \providecommand{\doi}{DOI~\discretionary{}{}{}\begingroup
  \urlstyle{rm}\Url}\fi

\bibitem{And:center}
Andruskiewitsch, N.: Notes on extensions of {H}opf algebras.
\newblock Canad. J. Math. \textbf{48}(1), 3--42 (1996).

\bibitem{Barr3}
Barr, M.: Coalgebras over a commutative ring.
\newblock Journal of {A}lgebra \textbf{32}, 600--610 (1974).

\bibitem{classi}
Borceux, F., Bourn, D.: Split extension classifier and centrality.
\newblock Contemporary Mathematics \textbf{431} (2007).

\bibitem{Clementino}
Borceux, F., Clementino, M., Montoli, A.: On the representability of actions
  for topological algebras.
\newblock Mathematical Texts (University of Coimbra) (46), 41--66 (2014).

\bibitem{BJK2}
Borceux, F., Janelidze, G., Kelly, G.: On the representability of actions in a
  semi-abelian category.
\newblock Theory Appl. Categories \textbf{14\hspace*{1mm}}(11), 244 -- 286
  (2005).

\bibitem{BJK}
Borceux, F., Janelidze, G., Kelly, G.M.: Internal object actions.
\newblock Commentationes Mathematicae Universitatis Carolinae \textbf{46}(2),
  235 -- 255 (2005).

\bibitem{protomodularity}
Bourn, D.: Normalization equivalence, kernel equivalence and affine categories.
\newblock Springer Berlin Heidelberg, Lecture Notes in Mathematics
  \textbf{1488}, 43--62 (1991).

\bibitem{MR2553540}
Bourn, D., Janelidze, G.: Centralizers in action accessible categories.
\newblock Cah. Topol. G\'eom. Diff\'er. Cat\'eg. \textbf{50}(3), 211--232
  (2009).
  
\bibitem{Burris}  
Burris, S., Sankappanavar, H.\ P.:
A course in universal algebra.
\newblock Graduate Texts in Mathematics, \textbf{78}. Springer-Verlag, New York-Berlin, 1981.

\bibitem{Center}
Chirvasitu, A., Kasprzak, P.: On the {H}opf (co)center of a {H}opf algebra.
\newblock Journal of Algebra \textbf{464}, 141 -- 174 (2016).

\bibitem{Cigoli}
Cigoli, A.S., Mantovani, S.: Action accessibility via centralizers.
\newblock Journal of Pure and Applied Algebra \textbf{216}(8-9), 1852 -- 1865
  (2012).
\newblock Special Issue devoted to the International Conference in Category
  Theory `CT2010'.

\bibitem{Charactersitic}
Cigoli, A.S., Montoli, A.: Characteristic subobjects in semi-abelian
  categories.
\newblock Theory and {A}pplications of {C}ategories \textbf{30}(7), 206 -- 228
  (2015).

\bibitem{GMVdL}
Garc\'ia-Mart\'inez, X., {Van der Linden}, T.: A note on split extensions of
  bialgebras.
\newblock arXiv:1701.00665  (2017).

\bibitem{GKV}
Gran, M., Kadjo, G., Vercruysse, J.: A torsion theory in the category of
  cocommutative {H}opf algebras.
\newblock Applied Categorical Structures \textbf{24}(3), 269--282 (2016).

\bibitem{Huq}
Huq, S.A.: Commutator, nilpotency, and solvability in categories.
\newblock Quart. J. Math. Oxford Ser. (2) \textbf{19}, 363--389 (1968).

\bibitem{JMT} Janelidze G., M\'arki L. and Tholen W.: Semi-abelian categories, \textbf{168}, 367--386 (2002).

\bibitem{Manin}
Manin, Yu. I.: Quantum groups and noncommutative geometry, Universit\'e de Montr\'eal, Centre de Recherches Math\'ematiques, Montreal, QC, 1988.

\bibitem{Milnor}
Milnor, J., Moore, J.: On the structure of {H}opf algebras.
\newblock Annals of Mathematics, Second Series \textbf{81\hspace*{1mm}}(2),
  211--264 (1965).

\bibitem{Molnar}
Molnar, R.K.: Semi-direct products of {H}opf algebras.
\newblock J. Algebra \textbf{47}, 29--51 (1977).

\bibitem{Montgom}
Montgomery, S.: Hopf algebras and their actions on rings.
\newblock CBMS Regional Conference Series in Mathematics \textbf{82} (1993).

\bibitem{Sweedler}
Sweedler, M.E.: Hopf algebras.
\newblock Benjamin New York  (1969).

\bibitem{Tambara}
Tambara, C.: The coendomorphism bialgebra of an algebra, J. Fac. Sci. Univ. Tokyo Sect. IA Math. \textbf{37}, 425--456 (1990).


\bibitem{Vespa}
Vespa, C., Wambst, M.: On some properties of the category of cocommutative
  {H}opf algebras.
\newblock arXiv:1502.04001  (2015).

\end{thebibliography}
\end{document}